\newtheorem{theorem}{Theorem}[section]
\newtheorem{definition}{Definition}[section]
\newtheorem{lemma}{Lemma}[section]
\newtheorem{corollary}{Corollary}[section]
\newtheorem{remark}{Remark}[section]
\newtheorem{example}{Example}[section]
\newcommand{\be}{\begin{equation}}
\newcommand{\ee}{\end{equation}}
\numberwithin{equation}{section}
\newcommand{\bea}{\begin{eqnarray}}
\newcommand{\eea}{\end{eqnarray}}
\newcommand{\beb}{\begin{eqnarray*}}
\newcommand{\eeb}{\end{eqnarray*}}
\begin{document}
\title{Statistical and rough statistical convergence in an S-metric space}
\author{Sukila Khatun$^{1}$ and Amar Kumar Banerjee$^{2}$}
\address{$^{1}$Department of Mathematics, The University of Burdwan,
Golapbag, Burdwan-713104, West Bengal, India.} 
\address{$^{3}$Department of Mathematics, The University of Burdwan,
Golapbag, Burdwan-713104, West Bengal, India.}
\email{$^{1}$sukila610@gmail.com}
\email{$^{3}$akbanerjee@math.buruniv.ac.in, akbanerjee1971@gmail.com}

\begin{abstract}
In this paper, using the concept of natural density, we have introduced the ideas of statistical and rough statistical convergence in an $S$-metric space. We have investigated some of their basic properties. We have defined statistical Cauchyness and statistical boundedness of sequences and then some results related these ideas have been studied. We have defined the set of rough statistical limit points of a sequence in an $S$-metric space and have proved some relevant results associated with such type of convergence. 
\end{abstract}
\subjclass[2020]{40A05, 40A99.}
\keywords{Natural density, statistical convergence, rough convergence, S-metric spaces, rough limit sets.}
\maketitle
\section{\bf{Introduction}}
The notion of statistical convergence is a generalization of ordinary convergence. The definition of statistical convergence was introduced by H. Fast \cite{HF} and H. Steinhaus \cite{HS} independently in the year of 1951. The formal definitions are as follows: \\
For $ B \subset \mathbb{N}$ and $B_{n}= \{k \in B : k \leq n \}$.  Then natural density of $B$ is denoted by $\delta(B)$ and defined by $\delta(B)=lim_{n\to\infty} \frac{|B_{n}|}{n}$, if the limit exists, where $|B_{n}|$ stands for the cardinality of $B_{n}$. 
A real sequence $\{ \xi_n \}$ is said to be statistically convergent to $\xi$ if for every $\varepsilon>0$ the set $B(\varepsilon)= \{ k \in \mathbb{N}: |\xi_{n} - \xi| \geq \varepsilon \}$ has natural density zero. In this case, $\xi$ is called the statistical limit of the sequence $\{ \xi_n \}$ and we write $st-lim \ \xi_{n}= \xi$.
In many directions works were done on statistical convergence by many authors \cite{{RA},{DG1},{DG2},{LLG},{PMROUGH2},{TS}}. \\
Many authors tried to give generalization of the concept of metric spaces in several ways. For example probabilistic metric spaces \cite{KM}, $C^*$-algebra valued metric spaces  \cite{MJS} etc. and related works were carried out by several authors e.g. \cite{{AKBAB},{AKB,AP}, {AKB,MP}}.
In 2012, Sedghi et al. \cite{SSA} introduced the idea of $S$-metric spaces as a new structure of metric spaces. \\
 The idea of rough convergence of sequences in a normed linear space was introduced by H. X. Phu \cite{PHU} in 2001. Also, Phu \cite{PHU1} extended this concept in an infinite dimensional normed space in 2003. After that several works \cite{{AYTER1},{AYTER2},{NH},{PMROUGH1},{SUK1}} were done in many generalized spaces. For example the idea of rough convergence in a metric space was studied by S. Debnath and D. Rakhshit \cite{DR}, in a cone metric space it was studied by A. K. Banerjee and R. Mondal \cite{RMROUGH} and in a partial metric space by A. K. Banerjee and S. Khatun \cite{{SUK2},{SUK4}}.\\
 In this paper, we have introduced the idea of statistical convergence in an $S$-metric space. We have investigated some basic properties of statistical convergence in this space. We have defined statistical Cauchyness and statistical boundedness. Also we have introduced the idea of rough statistical convergence which is an extension work of rough convergence in an $S$-metric space. We have defined the rough limit set of a sequence and investigated some basic properties of rough limit set in this space. Also we have proved some relevant theorems in this space.\\
\section{\bf{Preliminaries}}

\begin{definition} \cite{SSA}
In a nonempty set $X$ a function $S: X^{3} \longrightarrow [0, \infty)$ is said to be an $S$-metric on $X$ if the following three conditions hold, for every $x,y,z,a \in X$:\\
$(i)$ $S(x,y,z) \geq 0$,\\
$(ii)$ $S(x,y,z) = 0$ if and only if $x=y=z$,\\
$(iii)$  $S(x,y,z) \leq S(x,x,a) + S(y,y,a) + S(z,z,a)$.\\
Then the pair $(X, S)$ is said to be an $S$-metric space.    
\end{definition}
Properties and examples of $S$-metric spaces have been thoroughly discussed in \cite{SSA}.

\begin{definition} \cite{SSA}
In an $S$-metric space  $(X,S)$ for $r>0$ and $x \in X$ we define the open and closed ball of radius $r$ and center $x$ respectively as follows  : 
\begin{center}
    $B_{S}(x,r)=\{ y \in X : S(y,y,x)<r  \}$ \\
$B_{S}[x,r]=\{ y \in X : S(y,y,x) \leq r  \}$ 
\end{center}
\end{definition}
In an $S$-metric space $(X,S)$ the collection of all open balls $\tau$ in $X$ forms a base of a topology on $X$ called topology induced by the $S$-metric. The open ball $B_{S}(x,r)$ of radius $r$ and centre $x$ is an open set in $X$.

\begin{definition} \cite{SSA}
In an $S$-metric space $(X,S)$ a subset $A$ of $X$ is said to be $S$-bounded if there exists a $r>0$ such that $S(x,x,y)<r$ for every $x,y \in A$.
\end{definition}

\begin{definition} \cite{SSA}
In an $S$-metric space $(X,S)$ a sequence $\{x_{n} \}$ is said to be convergent to $x$ in $X$ if for every $\epsilon > 0$ there exists a natural number $k$ such that $S(x_{n}, x_{n}, x)< \epsilon$ for every $n \geq k$.
\end{definition}

\begin{definition} \cite{SSA}
In an $S$-metric space $(X,S)$ a sequence $\{x_{n} \}$ is said to be a Cauchy sequence in $X$ if for every $\varepsilon > 0$ there exists a natural number $k$ such that $S(x_{n}, x_{n}, x_{m})< \varepsilon$ for every $n,m \geq k$.
\end{definition}

\begin{lemma} \cite{SSA} 
 In an $S$-metric space, we have $S(x,x,y)= S(y,y,x)$.   
\end{lemma}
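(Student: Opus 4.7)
The plan is to derive the identity purely from the axioms of Definition~2.1, using the generalized triangle inequality $(iii)$ together with the fact that $S(x,x,x)=0$ from $(ii)$. I would prove the two inequalities $S(x,x,y)\le S(y,y,x)$ and $S(y,y,x)\le S(x,x,y)$ separately, then combine them.

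For the first inequality, I would apply condition $(iii)$ to the triple of entries $(x,x,y)$ with the choice $a=x$. This gives
\[
S(x,x,y)\le S(x,x,x)+S(x,x,x)+S(y,y,x),
\]
and since $(ii)$ forces $S(x,x,x)=0$, the right-hand side collapses to $S(y,y,x)$. By symmetry of the roles of $x$ and $y$, applying $(iii)$ to the triple $(y,y,x)$ with $a=y$ yields $S(y,y,x)\le S(x,x,y)$. Putting the two bounds together gives the claimed equality.

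The only subtlety is making sure to instantiate the triangle inequality with the correct point $a$ so that two of the three terms on the right vanish; no further machinery is needed. Thus I do not expect any serious obstacle — the proof should be a two-line consequence of axioms $(ii)$ and $(iii)$.
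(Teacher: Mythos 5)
Your proof is correct: instantiating axiom $(iii)$ on the triple $(x,x,y)$ with $a=x$ and on $(y,y,x)$ with $a=y$, using $S(x,x,x)=S(y,y,y)=0$ from $(ii)$, gives both inequalities and hence the equality. The paper itself only cites this lemma from Sedghi et al.\ without reproducing a proof, and your argument is precisely the standard one given in that reference, so there is nothing further to add.
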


\begin{definition} \cite{SUK1}
A sequence $\{ x_{n} \}$ in an $S$-metric space $(X, S)$ is said to be rough convergent or simply $r$-convergent to $p$ if for every $\varepsilon > 0$ there exists a natural number $k$ such that $S(x_{n}, x_{n}, p) <r + \varepsilon $ holds for all $n \geq k$.\\
$r$ is called the roughness degree of $\{x_{n}\}$. Note that rough limit of a sequence may not be unique. The set of all rough limits of a sequence is denoted by $LIM^{r}x_{n}$. If $r=0$ then the idea of rough convergence reduces to ordinary convergence.
\end{definition}

\section{\bf{Statistical convergence in an $S$-metric space}}

\begin{definition}
    A sequence $\{x_n\}$ in an $S$-metric space $(X,S)$ is said to be statistically convergent to $x \in X$ if for any $\varepsilon>0$, $\delta(A(\varepsilon))=0$, where $A(\varepsilon)=\{n \in \mathbb N: S(x_n,x_n,x) \geq \varepsilon\}$ and we write $st-lim_{n\to\infty}x_n=x$.     
\end{definition}

\begin{theorem}
Every  convergent sequence is statistically convergent in an $S$-metric space $(X,S)$.
\end{theorem}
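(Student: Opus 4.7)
The plan is to reduce the theorem to the observation that any finite subset of $\mathbb{N}$ has natural density zero. This is the standard route used for ordinary metric spaces, and the $S$-metric structure plays no essential role beyond providing the definition of convergence.

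First, I would start with a sequence $\{x_n\}$ in $(X,S)$ that converges to some $x \in X$ in the sense of Definition 2.4. Fix an arbitrary $\varepsilon > 0$. By the definition of $S$-metric convergence, there exists a natural number $k$ such that $S(x_n, x_n, x) < \varepsilon$ for every $n \geq k$. Consequently, the set
\[
A(\varepsilon) = \{ n \in \mathbb{N} : S(x_n, x_n, x) \geq \varepsilon \}
\]
is contained in the finite set $\{1, 2, \ldots, k-1\}$.

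Next I would invoke the elementary fact that if $A \subset \mathbb{N}$ is finite, then $\delta(A) = 0$; indeed, $|A_n| \leq |A|$ for all $n$, so $|A_n|/n \to 0$. Applying this to $A(\varepsilon)$ gives $\delta(A(\varepsilon)) = 0$. Since $\varepsilon > 0$ was arbitrary, Definition 3.1 yields $st\text{-}\lim_{n \to \infty} x_n = x$, completing the argument.

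There is no real obstacle here: the only subtlety to be careful about is making sure one uses the correct definition of $S$-metric convergence (involving $S(x_n, x_n, x)$ and not $S(x_n, x, x)$), but Lemma 2.1 ensures these are equal anyway. The proof is a direct translation of the classical argument and should fit in just a few lines.
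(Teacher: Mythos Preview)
Your proposal is correct and follows essentially the same route as the paper: show that $A(\varepsilon)$ is contained in the finite set $\{1,\ldots,k-1\}$ (the paper writes $\{1,\ldots,N-1\}$), then use that finite subsets of $\mathbb{N}$ have natural density zero to conclude $\delta(A(\varepsilon))=0$. There is nothing to add; your write-up matches the paper's argument line for line.
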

\begin{proof}
Let $(X,S)$ be an $S$-metric space and let $\{x_n\}$ be a convergent sequence converging to $x$ in $X$.
Then for $\varepsilon>0$, there exists $N \in \mathbb N$ such that $ S(x_n,x_n,x)< \varepsilon$, $\forall n \geq N$.
We consider the set $A(\varepsilon)=\{n \in \mathbb N: S(x_n,x_n,x) \geq \varepsilon\} \subset \{1,2,3,......,(N-1) \}=P$(say).
Since $P$ is a finite set, so $\delta(P)=0$.
Hence $\delta(A(\varepsilon))=0$.
Therefore $\{x_n\}$ is statistically convergent to $x$ in $(X,S)$.
So $ st-lim_{n\to\infty}x_n=x $. 
    
\end{proof}

\begin{remark}
    The converse of the above Theorem may not be true as shown in the following example.
\end{remark}

\begin{example}
Let $X=\mathbb R^2$ and $||.||$ be the Euclidean norm on $X$, then $S(x,y,z)=||x-z||+||y-z||$, $\forall x,y,z \in X$ is an $S$-metric space on $X$.\\ 
Let $\{x_{n}\}$ be a sequence in $\mathbb R^2$ defined by 
  \begin{equation*}
    \  x_{n}= \begin{cases}
          (k,k) & \text { if $n=k^2$ for some $k \in \mathbb N$}, \\
          (0,0) & \text { otherwise }
                 \end{cases}
  \end{equation*}
Let $\varepsilon>0$ be given.
Then we will show that $\{x_{n}\}$ is statistically convergent to $x=(0,0)$.\\
Now, $A(\varepsilon)=\{n \in \mathbb N: S(x_n,x_n,x)\geq \varepsilon\} \subset \{1^2,2^2,3^2.......\}=P$ (say).\\
As $\delta(P)=0$, therefore $\delta(A(\varepsilon))=0$.
Hence $ st-lim_{n\to\infty}x_n=x $.\\
So, $\{x_{n}\}$ is statistically convergent to $x=(0,0)$.\\
If $\{x_{n}\}$ were bounded, then $\exists \ B(>0) \in \mathbb R$ such that $S(x_{n},x_{n},x_{m})< B$ $\forall n,m \in \mathbb N$ i.e. $2||x_n-x_m||<B$ $\forall n,m \in \mathbb N$ i.e. $||x_n-x_m||< \frac{B}{2}$......(1) $\forall n,m \in \mathbb N$.
Choose $n=k^2$ where $k>B$, $k \in \mathbb N$ and $m$ is any natural number such that  $m \neq k^2$ for $k \in \mathbb N$, then $x_m=(0,0)$ and $x_n=(k,k)$.
So $||x_n-x_m||=\sqrt{(k-0)^2+(k-0)^2}=\sqrt{2}k>B>\frac{B}{2}$. This contradicts (1).
So it is not bounded and hence $\{x_{n}\}$ is not ordinary convergent.
\end{example}

By the following theorem we conclude that the statistical limit in an $S$-metric space is unique.

\begin{theorem}
    Let $\{x_n\}$ be a sequence in an $S$-metric space $(X,S)$ such that $x_{n} \stackrel{st}{\longrightarrow} x$ and $x_{n} \stackrel{st}{\longrightarrow} y$, then $x=y$.
\end{theorem}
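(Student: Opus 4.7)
The plan is to argue by contradiction: suppose $x \neq y$. Then by axiom $(ii)$ of the $S$-metric, $S(x,x,y) > 0$, so we may set $\varepsilon = S(x,x,y)$. The strategy is to show that under the two statistical convergence hypotheses, $\mathbb{N}$ can be covered by two sets of natural density zero, contradicting $\delta(\mathbb{N}) = 1$.

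The key algebraic step is to estimate $S(x,x,y)$ from above using the rectangle/triangle inequality $(iii)$ applied with the fourth point $a = x_n$. Specializing to the case where the first two entries coincide, I get
\[
S(x,x,y) \leq 2\,S(x,x,x_n) + S(y,y,x_n),
\]
and by the symmetry Lemma~2.1 (that is, $S(x,x,y) = S(y,y,x)$) this can be rewritten as
\[
\varepsilon = S(x,x,y) \leq 2\,S(x_n,x_n,x) + S(x_n,x_n,y)
\]
for every $n \in \mathbb{N}$. Consequently, for each $n$ at least one of the two summands must be large: either $S(x_n,x_n,x) \geq \varepsilon/4$ or $S(x_n,x_n,y) \geq \varepsilon/2$.

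Define
\[
A = \{n \in \mathbb{N} : S(x_n,x_n,x) \geq \varepsilon/4\}, \qquad B = \{n \in \mathbb{N} : S(x_n,x_n,y) \geq \varepsilon/2\}.
\]
The observation in the previous paragraph says $\mathbb{N} = A \cup B$. The statistical convergence hypotheses $x_n \stackrel{st}{\longrightarrow} x$ and $x_n \stackrel{st}{\longrightarrow} y$ force $\delta(A) = 0$ and $\delta(B) = 0$, hence $\delta(A \cup B) = 0$ by subadditivity of natural density. But $\delta(\mathbb{N}) = 1$, a contradiction. Therefore $x = y$.

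I do not foresee any serious obstacle here; the only subtlety is getting the right constants in the ``either/or'' split so that the two exceptional sets are exactly those controlled by the definition of statistical convergence to $x$ and to $y$. The choice $(\varepsilon/4, \varepsilon/2)$ above is tailored to the coefficients $(2,1)$ in the specialized $S$-metric inequality; any split with $2\alpha + \beta < \varepsilon$ violating the inequality would do equally well.
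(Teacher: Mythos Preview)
Your proof is correct and follows essentially the same route as the paper: both hinge on the inequality $S(x,x,y)\le 2S(x_n,x_n,x)+S(x_n,x_n,y)$ obtained from axiom~(iii) with $a=x_n$ together with Lemma~2.1, and both exploit that the union of two density-zero sets has density zero. The only difference is packaging---the paper argues directly (picking $k$ in the density-one complement and showing $S(x,x,y)<\varepsilon$ for every $\varepsilon>0$, with the symmetric split $\varepsilon/3,\varepsilon/3$) whereas you argue the contrapositive by covering $\mathbb{N}$ with two null sets; the constants $\varepsilon/4,\varepsilon/2$ versus $\varepsilon/3,\varepsilon/3$ are immaterial.
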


\begin{proof}
    Let $\varepsilon>0$ be arbitrary. Since $x_{n} \stackrel{st}{\longrightarrow} x$ and $x_{n} \stackrel{st}{\longrightarrow} y$,
     for $\varepsilon>0$, $\delta(A_1(\varepsilon))=0$ and $\delta(A_2(\varepsilon))=0$, where $A_1(\varepsilon)=\{n \in \mathbb N: S(x_n,x_n,x) \geq \frac{\varepsilon}{3}\}$ and $A_2(\varepsilon)=\{n \in \mathbb N: S(x_n,x_n,y) \geq \frac{\varepsilon}{3}\}$.
    Let $K(\varepsilon)=A_1(\varepsilon) \cup A_2(\varepsilon)$, then $\delta(K(\varepsilon))=0$. Hence $\delta(K(\varepsilon))^c=1$. Suppose $ k \in (K(\varepsilon))^c=(A_1(\varepsilon) \cup A_2(\varepsilon))^c=(A_1(\varepsilon))^c \cap (A_2(\varepsilon))^c$.
    Then 
    \begin{equation*}
         \begin{split}
    S(x,x,y) & \leq S(x,x,x_k)+S(x,x,x_k)+S(y,y,x_k) \\
             & =S(x_k,x_k,x)+S(x_k,x_k,x)+S(x_k,x_k,y) \\
             & < \frac{\varepsilon}{3}+\frac{\varepsilon}{3}+\frac{\varepsilon}{3} = \varepsilon 
        \end{split}
    \end{equation*}
    Since $\varepsilon>0$ is arbitrary, we get $S(x,x,y)=0$.
    Therefore $x=y$.
\end{proof}

\begin{definition}
    A sequence $\{x_n\}$ in an $S$-metric space $(X,S)$ is said to be statistically Cauchy if for any $\varepsilon>0$, $\exists N \in \mathbb N$ such that $\delta(B(\varepsilon))=0$, where $B(\varepsilon)=\{n \in \mathbb N: S(x_n,x_n,x_N) \geq \varepsilon\}$.
\end{definition}

\begin{theorem}
    Let $\{x_n\}$ be a statistically convergent sequence in an $S$-metric space $(X,S)$. Then the sequence $\{x_n\}$ is statistically Cauchy sequence in $(X,S)$. 
\end{theorem}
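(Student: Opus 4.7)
The plan is to mimic the standard metric-space proof that statistical convergence implies statistical Cauchyness, adapting the three-term comparison to the $S$-metric triangle axiom. Suppose $\{x_n\}$ statistically converges to some $x \in X$. For a given $\varepsilon > 0$, define
\[
A(\varepsilon) = \Bigl\{ n \in \mathbb{N} : S(x_n, x_n, x) \geq \tfrac{\varepsilon}{3} \Bigr\}.
\]
By hypothesis $\delta(A(\varepsilon)) = 0$, so the complement $A(\varepsilon)^c$ has density $1$ and in particular is infinite, hence nonempty. The first step is to \emph{select} an index $N \in A(\varepsilon)^c$; this supplies the witness required by the definition of statistical Cauchyness.

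Next, for every $n \in A(\varepsilon)^c$ apply the $S$-metric inequality with $a = x$:
\[
S(x_n, x_n, x_N) \leq S(x_n, x_n, x) + S(x_n, x_n, x) + S(x_N, x_N, x) < \tfrac{\varepsilon}{3} + \tfrac{\varepsilon}{3} + \tfrac{\varepsilon}{3} = \varepsilon,
\]
using Lemma 2.1 (symmetry) implicitly if needed to match the form $S(x_N, x_N, x)$. This shows that
\[
B(\varepsilon) = \{ n \in \mathbb{N} : S(x_n, x_n, x_N) \geq \varepsilon \} \subseteq A(\varepsilon),
\]
because any $n \notin A(\varepsilon)$ satisfies $S(x_n, x_n, x_N) < \varepsilon$ by the display above. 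Since $\delta(A(\varepsilon)) = 0$, monotonicity of natural density yields $\delta(B(\varepsilon)) = 0$, which is exactly the condition for $\{x_n\}$ to be statistically Cauchy with witness $N$.

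I do not anticipate a real obstacle here; the only slightly delicate point is conceptual, namely recognising that the definition of statistically Cauchy demands merely the \emph{existence} of one suitable $N$, not a condition valid for all $N$. Once one sees that $N$ can be harvested from the density-$1$ set $A(\varepsilon)^c$, the $S$-metric axiom does the rest with the budget $\tfrac{\varepsilon}{3}$ split three ways.
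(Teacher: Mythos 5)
Your proposal is correct and follows essentially the same route as the paper: pick the witness $N$ from the density-one set $A(\varepsilon)^c$, use the $S$-metric inequality (with the symmetry lemma) to bound $S(x_n,x_n,x_N)<\varepsilon$ on $A(\varepsilon)^c$, and conclude $B(\varepsilon)\subseteq A(\varepsilon)$ so that $\delta(B(\varepsilon))=0$. No gaps.
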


\begin{proof}
Let the sequence $\{x_n\}$ be statistically convergent to $x$ in $(X,S)$.
Then for an arbitrary $\varepsilon>0$, $\delta(A(\varepsilon))=0$, where $A(\varepsilon)=\{n \in \mathbb N: S(x_n,x_n,x)\geq \frac{\varepsilon}{3}\}$. So $ \delta{(A(\varepsilon))}^c=1$.
 Take $N_\varepsilon \in A(\varepsilon)^c$. Then $ S(x_{N_\varepsilon}, x_{N_\varepsilon}, x)< \frac{\varepsilon}{3}$.
 Now, we show that 
 \begin{equation*}
     \{n \in \mathbb N: S(x_n,x_n,x) < \frac{\varepsilon}{3}\} \subset \{n \in \mathbb N: S(x_n,x_n,x_{N_\varepsilon}) < \varepsilon\}.
 \end{equation*}
 Let $k \in \{n \in \mathbb N: S(x_n,x_n,x) < \frac{\varepsilon}{3}\}=A(\varepsilon)^c$.
 Then $S(x_k,x_k,x) < \frac{\varepsilon}{3}$.
 Now, \begin{equation*}
     \begin{split}
       S(x_k,x_k,x_{N_\varepsilon}) & \leq S(x_k,x_k,x)+S(x_k,x_k,x)+S(x_{N_\varepsilon},x_{N_\varepsilon},x) \\
       & < \frac{\varepsilon}{3}+\frac{\varepsilon}{3}+\frac{\varepsilon}{3}  = \varepsilon
     \end{split}
 \end{equation*}
 So, $k \in \{n \in \mathbb N: S(x_n,x_n,x_{N_\varepsilon}) < \varepsilon\}$.
 Hence $A(\varepsilon)^c \subset B(N_\varepsilon)^c$, where $B(N_\varepsilon)= \{n \in \mathbb N: S(x_n,x_n,x_{N_\varepsilon})\geq \varepsilon\}$.
This implies that $B(N_\varepsilon) \subset A(\varepsilon)$.
Since $\delta(A(\varepsilon))=0$, so $\delta(B(N_\varepsilon))=0$.
Hence the theorem follows.
\end{proof}

\begin{theorem}
    Let $\{x_n\}$ be a statistical convergent sequence in an $S$-metric space $(X,S)$. Then there is a convergent sequence $\{y_n\}$ in $X$ such that $x_n=y_n$ for almost all $n \in \mathbb N$.
\end{theorem}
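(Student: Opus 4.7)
The strategy is the well-known approach of producing a set $K \subseteq \mathbb{N}$ of natural density $1$ along which $\{x_n\}$ converges to the statistical limit $x$ in the ordinary sense, and then defining $y_n = x_n$ on $K$ and $y_n = x$ off $K$; the conclusion that $x_n = y_n$ for almost all $n$ then follows for free from $\delta(\mathbb{N} \setminus K) = 0$.

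First, denoting the statistical limit by $x$, for each $j \in \mathbb{N}$ I would set
\[
A_j = \bigl\{ n \in \mathbb{N} : S(x_n, x_n, x) \geq 1/j \bigr\},
\]
so that $\delta(A_j) = 0$ and hence there exists $n_j$ with $|A_j \cap \{1, \ldots, n\}|/n < 1/j$ for every $n \geq n_j$. I would choose these thresholds recursively to form a strictly increasing sequence $n_1 < n_2 < \cdots$ that grows fast enough to dominate the accumulating head contributions (it suffices, for instance, to require also $n_{j+1} > j \cdot (n_1 + n_2 + \cdots + n_j)$). With these in hand, I would put
\[
K = \{1, \ldots, n_1\} \cup \bigcup_{j \geq 1} \bigl( (n_j, n_{j+1}] \setminus A_j \bigr),
\]
so that $\mathbb{N} \setminus K = \bigcup_{j \geq 1} \bigl( A_j \cap (n_j, n_{j+1}] \bigr)$, a disjoint union of blocks.

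The main (and essentially only) obstacle is verifying that $\delta(\mathbb{N} \setminus K) = 0$. For $n$ lying in a block $(n_{j_0}, n_{j_0+1}]$ one estimates
\begin{align*}
\frac{|(\mathbb{N} \setminus K) \cap \{1, \ldots, n\}|}{n} &\leq \frac{1}{n}\sum_{i=1}^{j_0 - 1} |A_i \cap \{1, \ldots, n_{i+1}\}| + \frac{|A_{j_0} \cap \{1, \ldots, n\}|}{n} \\
&< \frac{1}{n}\sum_{i=1}^{j_0 - 1} \frac{n_{i+1}}{i} + \frac{1}{j_0}.
\end{align*}
By the growth condition on the $n_j$'s the first summand tends to $0$ as $n \to \infty$ (since $n \geq n_{j_0}$ is much larger than the accumulated head, using the crude bound $\sum_{i=1}^{j_0-1} n_{i+1}/i \leq \sum_{i=2}^{j_0} n_i < n_{j_0}/(j_0-1)$), while the second tends to $0$ because $j_0 \to \infty$ as $n \to \infty$.

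Finally, having $\delta(\mathbb{N} \setminus K) = 0$, I would define $y_n = x_n$ for $n \in K$ and $y_n = x$ for $n \notin K$. Then $\{n : x_n \neq y_n\} \subseteq \mathbb{N} \setminus K$ has density $0$, so $x_n = y_n$ for almost all $n$. To check that $\{y_n\}$ converges to $x$ in the sense of Definition~2.4, I would fix $\varepsilon > 0$, pick $j$ with $1/j < \varepsilon$, and observe that for $n > n_j$ either $n \notin K$ (so $S(y_n, y_n, x) = 0$), or $n$ belongs to some block $(n_k, n_{k+1}] \setminus A_k$ with $k \geq j$, in which case $S(y_n, y_n, x) = S(x_n, x_n, x) < 1/k \leq 1/j < \varepsilon$. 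This yields the required ordinary convergent sequence agreeing with $\{x_n\}$ almost everywhere.
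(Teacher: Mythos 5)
Your construction is essentially the paper's own proof: both choose thresholds $n_1 < n_2 < \cdots$ so that beyond $n_j$ the indices with $S(x_n,x_n,x)$ at least a quantity tending to zero ($1/j$ for you, $1/2^k$ in the paper) have relative frequency below that same quantity, then define $y_n = x_n$ on the resulting density-one set and $y_n = x$ elsewhere, and the verification that $\{y_n\}$ converges to $x$ is identical. The one blemish is the intermediate chain $\sum_{i=1}^{j_0-1} n_{i+1}/i \le \sum_{i=2}^{j_0} n_i < n_{j_0}/(j_0-1)$: the middle sum contains $n_{j_0}$ itself, so the final inequality is false as written for $j_0 \ge 3$. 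It is easily repaired under your growth condition: split off the $i = j_0-1$ term, which contributes at most $n_{j_0}/(j_0-1)$, and bound the remaining sum by $n_1+\cdots+n_{j_0-1} < n_{j_0}/(j_0-1)$, giving a total at most $2n_{j_0}/\bigl((j_0-1)n\bigr) \to 0$, so your conclusion $\delta(\mathbb{N}\setminus K)=0$ stands. Note that the paper sidesteps both the growth condition and this bookkeeping: since its cut-offs $1/2^k$ decrease in $k$, every disagreement index $m \le n$ with $n_{j_0} < n \le n_{j_0+1}$ automatically satisfies $S(x_m,x_m,x) \ge 1/2^{j_0}$, whence $\frac{1}{n}\left|\{m \le n : y_m \ne x_m\}\right| < 1/2^{j_0}$ directly; the same shortcut would work verbatim with your thresholds $1/j$.
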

\begin{proof}
    Let $\{x_n\}$ be a statistical convergent to $x$.
    Then for any $\varepsilon>0$, we have 
    $\delta(\{n \in \mathbb N: S(x_n,x_n,x)\geq \varepsilon \})=0$.
    So, $\delta(\{n \in \mathbb N: S(x_n,x_n,x) < \varepsilon \})= lim_{n\to\infty} \frac{| \{ k \leq n:S(x_k,x_k,x) < \varepsilon \}|}{n}=1$.
    So, for every $k \in \mathbb N$, $\exists$ $n_k \in \mathbb N$ such that $\forall$ $n > n_k$
    \begin{equation*}
        \frac{| \{ k \leq n:S(x_k,x_k,x) < \frac{1}{2^k} \}|}{n} > 1- \frac{1}{2^k}
    \end{equation*}
    Choose \begin{equation*}
    \  y_{m}= \begin{cases}
          x_m, & \text { if $1 \leq m \leq n_1$ }, \\
          x_m, & \text { if $n_k<m \leq n_{k+1}, S(x_m,x_m,x)< \frac{1}{2^k}$}, \\
          x,   & \text { otherwise }
                 \end{cases}
  \end{equation*}
  Let $\varepsilon>0$. Choose $k \in \mathbb N$, so that $ \frac{1}{2^k}< \varepsilon$.
  Now, for each $m>n_k$, $S(y_m,y_m,x)=S(x_m,x_m,x)< \frac{1}{2^k}< \varepsilon$.
  Hence $\{y_m\}$ converges to $x$. So $lim_{n\to\infty}y_m=x$. \\
  Let $n \in \mathbb N$ be fixed and let $n_k< n \leq n_{k+1}$, then we have 
  \begin{equation*}
      \{m \leq n:y_m \neq x_m \} \subset \{ 1,2,.....n\} - \{ m \leq n:S(x_m,x_m,x)< \frac{1}{2^k} \}
  \end{equation*}
  So, \begin{equation*}
    \begin{split}
 \frac{1}{n} |\{m \leq n:y_m \neq x_m \}| &  \leq 1- \frac{1}{n} |\{ m \leq n:S(x_m,x_m,x)< \frac{1}{2^k} \}| \\
                                          & < \frac{1}{2^k}< \varepsilon
          \end{split}
  \end{equation*}
  Hence $lim_{n\to\infty} \frac{1}{n} |\{m \leq n:y_m \neq x_m \}|=0$.
  So, $\delta(\{m \in \mathbb N:y_m \neq x_m \} )=0$.
  Therefore $x_m=y_m$ for almost all $m \in \mathbb N$.
\end{proof}

\begin{corollary}
 Every statistical convergent sequence has a convergent subsequence.   
\end{corollary}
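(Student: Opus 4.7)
The plan is to deduce this corollary directly from the preceding theorem, which guarantees that for any statistically convergent sequence $\{x_n\}$ one can find a convergent sequence $\{y_n\}$ with $x_n = y_n$ for almost all $n$. The key observation is that a set of natural density $1$ is necessarily infinite, which provides an index set along which $\{x_n\}$ and $\{y_n\}$ agree and which is large enough to extract a subsequence.

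First, I would let $\{x_n\}$ be statistically convergent in $(X,S)$, say $st\text{-}\lim x_n = x$. Applying the preceding theorem, I obtain a sequence $\{y_n\}$ with $\lim_{n\to\infty} y_n = x$ and $\delta(\{n \in \mathbb{N} : x_n \neq y_n\}) = 0$. Setting $M = \{n \in \mathbb{N} : x_n = y_n\}$, I then have $\delta(M) = 1$; in particular $M$ is infinite, so I can enumerate $M = \{n_1 < n_2 < n_3 < \cdots\}$.

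Next, I would consider the subsequence $\{x_{n_k}\}_{k \in \mathbb{N}}$ of $\{x_n\}$. By the definition of $M$, we have $x_{n_k} = y_{n_k}$ for every $k$, so $\{x_{n_k}\}$ coincides with $\{y_{n_k}\}$, which is a subsequence of the convergent sequence $\{y_n\}$. Since $\{y_n\}$ converges to $x$ in $(X,S)$, so does every subsequence; hence $\{x_{n_k}\}$ converges to $x$ in $(X,S)$, proving the corollary.

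There is no essential obstacle here because all the real work has been done in the preceding theorem; the only small point requiring care is the justification that the density-$1$ set $M$ is infinite (otherwise an enumeration would fail), but this is immediate since a finite subset of $\mathbb{N}$ has natural density $0$.
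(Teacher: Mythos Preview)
Your proof is correct and follows essentially the same route as the paper: apply the preceding theorem to obtain a convergent sequence $\{y_n\}$ agreeing with $\{x_n\}$ on a density-$1$ set, enumerate that set, and observe that along these indices $\{x_{n_k}\} = \{y_{n_k}\}$ is a subsequence of a convergent sequence. The only addition you make is the explicit justification that the density-$1$ set is infinite, which the paper leaves implicit.
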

\begin{proof}
  Let $\{x_n\}$ be a statistical convergent sequence.
  So $\exists$ a convergent sequence $\{y_n\}$ such that $x_n=y_n$ almost everywhere i.e $\delta(A)=0$, where $A=\{m \in \mathbb N:y_m \neq x_m \}$.
  Let us enumerate the set $\mathbb N \setminus A$ by $\{n_1<n_2<..... \}$.
  Therefore $y_{n_{k}}=x_{n_{k}}$ $\forall$ $k=1,2,3......$ and since $\{y_{n_{k}}\}$ is convergent, $\{x_{n_{k}}\}$ is a convergent subsequence of $\{x_n\}$.
\end{proof}

\begin{definition}
  Let $(X,S)$ be an $S$-metric space. If every statistically Cauchy sequence is statistically convergent, then $(X,S)$ is called statistically complete.  
\end{definition}

\begin{theorem}
    If $\{x_n\}$ is Cauchy then it is statistically Cauchy in an $S$-metric space. 
\end{theorem}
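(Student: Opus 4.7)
The plan is to reduce the statistical Cauchy condition to the ordinary Cauchy condition by exploiting the fact that any finite subset of $\mathbb{N}$ has natural density zero, which is the same trick used in the proof that convergence implies statistical convergence (Theorem 3.1 in the excerpt).

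First, I would fix an arbitrary $\varepsilon > 0$ and invoke the Cauchy property of $\{x_n\}$ to extract a natural number $N$ such that $S(x_n, x_n, x_m) < \varepsilon$ for all $n, m \geq N$. In particular, specializing $m = N$ gives $S(x_n, x_n, x_N) < \varepsilon$ for every $n \geq N$.

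Next, I would consider the set $B(\varepsilon) = \{n \in \mathbb{N} : S(x_n, x_n, x_N) \geq \varepsilon\}$ appearing in Definition 3.2. By the previous step, no index $n \geq N$ can lie in $B(\varepsilon)$, so $B(\varepsilon) \subseteq \{1, 2, \ldots, N-1\}$. Since any finite subset of $\mathbb{N}$ has natural density zero, it follows that $\delta(B(\varepsilon)) = 0$, which is exactly the statistical Cauchy condition witnessed by the same $N$.

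There is no real obstacle here; the argument is essentially a one-line containment once the correct witness $N$ is selected. The only point to be a little careful about is that the $N$ in Definition 3.2 is allowed to depend on $\varepsilon$, so I simply use the $N$ produced by the Cauchy property for the given $\varepsilon$. I would conclude by noting that since $\varepsilon > 0$ was arbitrary, $\{x_n\}$ is statistically Cauchy in $(X, S)$.
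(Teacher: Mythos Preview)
Your proposal is correct and follows essentially the same argument as the paper: fix $\varepsilon>0$, use the Cauchy condition to obtain an index $N$ (the paper calls it $k$), specialize to $m=N$, and observe that $B(\varepsilon)\subseteq\{1,\dots,N-1\}$ is finite and hence has natural density zero. The only differences are notational.
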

\begin{proof}
Let $\{x_n\}$  be a Cauchy sequence in $(X,S)$.
Let $\varepsilon>0$.
Then there exists a natural number $k$ such that $S(x_n,x_n,x_m)< \varepsilon$ for every $n,m \geq k$.
So, in particular  $S(x_n,x_n,x_k)< \varepsilon, \ \forall n\geq k$.
So, the set $B(\varepsilon)=\{n \in \mathbb N: S(x_n,x_n,x_k) \geq \varepsilon\} \subset \{1,2,3,......,(k-1) \}=Q$(say).
Since $Q$ is finite, so $\delta(Q)=0$.
Hence $\delta(B(\varepsilon))=0$.
Therefore $\{x_n\}$ is statistically Cauchy.
\end{proof}

\begin{theorem}
  Every statistically complete $S$-metric space is complete.  
\end{theorem}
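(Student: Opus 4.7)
The plan is to chain together the preceding results to reduce ordinary completeness to statistical completeness. First I would fix a Cauchy sequence $\{x_n\}$ in $(X,S)$. By Theorem 3.5 (``Cauchy implies statistically Cauchy''), $\{x_n\}$ is statistically Cauchy; then by hypothesis of statistical completeness, it must be statistically convergent to some $x \in X$. The goal is then to upgrade this statistical convergence to ordinary convergence, using the extra strength coming from the fact that $\{x_n\}$ is genuinely Cauchy.

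Next I would apply Corollary 3.1 to extract a subsequence $\{x_{n_k}\}$ that converges (in the ordinary sense) to a point in $X$. Inspecting the construction used in Theorem 3.4 and Corollary 3.1, the subsequence is obtained by keeping indices $m$ with $S(x_m,x_m,x)<1/2^k$ at appropriate stages, so it converges precisely to the statistical limit $x$ itself; thus $\{x_{n_k}\} \to x$ in $(X,S)$. (Uniqueness of the statistical limit, Theorem 3.2, also rules out the subsequence drifting to a different point.)

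Finally, I would close the argument by a standard ``Cauchy plus convergent subsequence implies convergence'' step, adapted to the $S$-metric axiom. Given $\varepsilon>0$, the Cauchy condition supplies an $N_1$ with $S(x_n,x_n,x_m)<\varepsilon/3$ for all $n,m \geq N_1$, and ordinary convergence of the subsequence supplies a $K$ with $S(x_{n_k},x_{n_k},x)<\varepsilon/3$ for all $k \geq K$. Choosing $k \geq K$ with $n_k \geq N_1$, the axiom $S(x,y,z)\leq S(x,x,a)+S(y,y,a)+S(z,z,a)$ applied with $a=x_{n_k}$, together with the symmetry $S(u,u,v)=S(v,v,u)$ from Lemma 2.1, yields $S(x_n,x_n,x) \leq 2S(x_n,x_n,x_{n_k}) + S(x_{n_k},x_{n_k},x) < \varepsilon$ for every $n \geq N_1$.

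The only delicate point I anticipate is the bookkeeping in the last step: the $S$-metric triangle inequality produces three summands rather than two, so one must split $\varepsilon$ into thirds and also remember to apply the symmetry lemma to convert $S(x,x,x_{n_k})$ into the form $S(x_{n_k},x_{n_k},x)$ that the convergence hypothesis controls. Everything else is a direct appeal to the results established earlier in the section.
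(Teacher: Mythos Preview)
Your proposal is correct and follows essentially the same route as the paper: pass from Cauchy to statistically Cauchy, invoke statistical completeness, extract a convergent subsequence via Corollary~3.1, and finish with the $S$-metric ``Cauchy plus convergent subsequence'' argument using the three-term inequality and Lemma~2.1. Your bookkeeping in the final step (choosing $k\geq K$ with $n_k\geq N_1$) is in fact slightly tidier than the paper's, and your remark that the subsequential limit coincides with the statistical limit $x$ is an extra observation the paper leaves implicit.
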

\begin{proof}
    Let $(X,S)$ be a statistically complete $S$-metric space.
    Suppose that $\{x_n\}$ is a Cauchy sequence in $(X,S)$, then it is statistically Cauchy sequence in $(X,S)$.
    Since $(X,S)$ is statistically complete, so $\{x_n\}$ is statistically convergent.
    By the Corollary 2.1, there is a subsequence $\{x_{n_{k}}\}$ of $\{x_n\}$ such that $\{x_{n_{k}}\}$ converges to a point $x \in X$.
    Since $\{x_n\}$ is Cauchy, hence for $\varepsilon>0$, there exists $N_1 \in \mathbb N$ such that $S(x_n,x_n,x_m)< \frac{\varepsilon}{3}$ for $n,m \geq N_1$.
    On the other hands, $\{x_{n_{k}}\}$ converges to $x$, so there exists $k_0$ such that $S(x_{n_{k}},x_{n_{k}},x)< \frac{\varepsilon}{3}$ \ $\forall$ $k \geq k_0$. 
    So in particular $S(x_{n_{k_0}},x_{n_{k_0}},x)< \frac{\varepsilon}{3}$ and let $N=max\{N_1,n_{k_0} \}$.
    Then for $n \geq N$, we have 
    \begin{equation*}
        \begin{split}
    S(x_n,x_n,x) & \leq S(x_n,x_n,x_{n_{k_0}})+S(x_n,x_n,x_{n_{k_0}})+S(x,x,x_{n_{k_0}}) \\
            & =S(x_n,x_n,x_{n_{k_0}})+S(x_n,x_n,x_{n_{k_0}})+S(x_{n_{k_0}},x_{n_{k_0}},x) \\
            & <  \frac{\varepsilon}{3}+\frac{\varepsilon}{3}+\frac{\varepsilon}{3} = \varepsilon
        \end{split}
    \end{equation*}
Hence $lim_{n\to\infty}x_n=x $.
So $(X,S)$ is a complete $S$-metric space.
\end{proof}

\begin{definition} (cf.\cite{AKB})
A sequence $\{x_n\}$ in an $S$-metric space $(X,S)$ is said to be statistically bounded if for any fixed $u \in X$ there exists a positive real number $B$ such that 
\begin{equation*}
    \delta(\{n \in N:S(x_n,x_n,u) \geq B\})=0
\end{equation*}
\end{definition}

\begin{theorem}
  Every statistically Cauchy sequence is statistically bounded in an $S$-metric space $(X,S)$. 
\end{theorem}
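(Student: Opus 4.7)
The plan is to exploit the statistically Cauchy hypothesis at a single fixed tolerance (say $\varepsilon=1$) to secure an anchor index $N$, and then to transfer boundedness relative to $x_N$ into boundedness relative to an arbitrary $u \in X$ via the $S$-metric triangle-type inequality.

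First, I would invoke the definition of statistical Cauchyness with $\varepsilon=1$ to obtain $N\in\mathbb{N}$ such that the set $A=\{n\in\mathbb{N}:S(x_n,x_n,x_N)\geq 1\}$ satisfies $\delta(A)=0$. In particular, for every $n\in A^c$, we have $S(x_n,x_n,x_N)<1$.

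Next, fix an arbitrary $u\in X$. Applying condition $(iii)$ of Definition 2.1 with the taking-points trick $a=x_N$, together with Lemma 2.1 to rewrite $S(u,u,x_N)$ if needed, I get
\begin{equation*}
S(x_n,x_n,u) \leq S(x_n,x_n,x_N) + S(x_n,x_n,x_N) + S(u,u,x_N) = 2S(x_n,x_n,x_N) + S(u,u,x_N),
\end{equation*}
valid for every $n\in\mathbb{N}$. Restricting to $n\in A^c$ yields $S(x_n,x_n,u) < 2+S(u,u,x_N)$. Choosing the constant $B := 2+S(u,u,x_N)+1$, I conclude $\{n\in\mathbb{N}:S(x_n,x_n,u)\geq B\} \subseteq A$, and since density is monotone with respect to inclusion, $\delta(\{n\in\mathbb{N}:S(x_n,x_n,u)\geq B\})=0$, which is exactly statistical boundedness of $\{x_n\}$ at the reference point $u$.

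There is no real obstacle here — the argument is a short two-line calculation. The only point that requires a moment of care is verifying that the $S$-metric triangle inequality produces the factor $2$ in front of $S(x_n,x_n,x_N)$ (because in condition $(iii)$ the first two terms both involve $x_n$), which is why the threshold $B$ must absorb both $2$ and the fixed quantity $S(u,u,x_N)$ rather than just $1+S(u,u,x_N)$ as in the ordinary metric case.
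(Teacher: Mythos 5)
Your proof is correct and follows essentially the same route as the paper: both fix a tolerance in the statistical Cauchy condition to get an anchor index $N$, apply the $S$-metric triangle inequality with $a=x_N$ to bound $S(x_n,x_n,u)$ by $2S(x_n,x_n,x_N)+S(u,u,x_N)$ on the density-one complement, and conclude by monotonicity of density. The only difference is cosmetic: you fix $\varepsilon=1$ while the paper keeps an arbitrary $\varepsilon$ split as $\varepsilon/2+\varepsilon/2$.
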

\begin{proof}
   Let $\varepsilon>0$ and let $\{x_n\}$ be a statistically Cauchy sequence in $(X,S)$. 
   Then there exists $N \in \mathbb N$ such that $\delta(B)=0$, where $B=\{n \in N:S(x_n,x_n,x_N) \geq \frac{\varepsilon}{2}\}$.
   Let $k \in B^c$, then $S(x_k,x_k,x_N) < \frac{\varepsilon}{2}$.
   Let $u \in X$ be fixed, then for the same $\varepsilon>0$, we have 
   \begin{equation*}
       \begin{split}
           S(x_k,x_k,u) & \leq S(x_k,x_k,x_N)+S(x_k,x_k,x_N)+S(u,u,x_N) \\
                        & < \frac{\varepsilon}{2}+\frac{\varepsilon}{2}+S(x_N,x_N,u) \\
                        & = \varepsilon+S(x_N,x_N,u) \\
                        & =a \ \text{(say)}
       \end{split}
   \end{equation*}
So, $k \in \{n \in N:S(x_n,x_n,u)<a\}$.
Therefore $B^c \subset \{n \in N:S(x_n,x_n,u)<a\}$.
Since $\delta(B)=0$, $\delta(B^c)=1$.
Hence $\delta(\{n \in N:S(x_n,x_n,u)<a\})=1$.
This implies that $\delta(\{n \in N:S(x_n,x_n,u) \geq a\})=0$.
Therefore the sequence $\{x_n\}$ is statistically bounded in $X$. 
\end{proof}

\begin{corollary}
 Every statistically convergent sequence is statistically bounded in an $S$-metric space.   
\end{corollary}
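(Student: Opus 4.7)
The plan is to obtain this corollary as an immediate consequence of two earlier results already established in the excerpt, chained together. Specifically, I would invoke the theorem stating that every statistically convergent sequence in an $S$-metric space is statistically Cauchy, and then apply the theorem immediately preceding the corollary, which asserts that every statistically Cauchy sequence is statistically bounded. Composing these two implications yields the desired conclusion.

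Concretely, I would begin the proof by letting $\{x_n\}$ be a statistically convergent sequence in $(X,S)$. First, by the statistical convergence implies statistical Cauchy theorem, $\{x_n\}$ is statistically Cauchy. Second, by the statistical Cauchy implies statistical boundedness theorem, $\{x_n\}$ is statistically bounded. Since both implications have already been proven, no further computation with the $S$-metric inequality or natural density estimates is required here.

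There is essentially no obstacle in this argument, as the corollary is purely a transitivity statement between previously established results. The only thing to be careful about is making sure to cite the correct two theorems in the correct order, and to phrase the proof as a two-sentence chain rather than reproving either implication. Accordingly, I expect the proof to be only a few lines long.
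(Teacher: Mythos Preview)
Your proposal is correct and matches the paper's intended approach: the corollary is stated immediately after the theorem that every statistically Cauchy sequence is statistically bounded, and the paper leaves it without proof precisely because it follows by chaining that result with the earlier theorem that statistical convergence implies statistical Cauchyness. Nothing further is needed.
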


\section{\bf{Rough statistical convergence in an $S$-metric space}}

\begin{definition}
    A sequence $\{x_n\}$ in an $S$-metric space $(X,S)$ is said to be rough statistical convergent or r-statistical convergent (or in short r-st convergent) to $x$, if for every $\varepsilon>0$ and for some roughness degree $r$,  $\delta(\{n \in \mathbb N : S(x_n,x_n,x) \geq r+\varepsilon\})=0$.
\end{definition}
For $r=0$, the rough statistical convergent becomes the statistical convergent in any $S$-metric spaces $(X,S)$. If a sequence $\{x_n\}$ is rough statistical convergent to $x$, then we denote it by the notation $x_{n} \stackrel{r-st}{\longrightarrow} x$ and $x$ is said to be a rough statistical limit point (or $r-st$ limit point) of $\{x_n\}$. The set of all $r-st$ limit points of a sequence $\{x_n\}$ is said to be the $r-st$ limit set. We denote it by $st-LIM^r x_n$, i.e., $st-LIM^{r}x_{n}= \left\{x \in X : x_{n} \stackrel{r-st}{\longrightarrow} x \right\}$. For the roughness degree $r>0$, the $r-st$ limit may not be unique.
\begin{theorem}
    Every rough convergent sequence in an $S$-metric space $(X,S)$ is rough statistically convergent.
\end{theorem}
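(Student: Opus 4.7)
The plan is to adapt the proof of Theorem 3.1 (which showed that ordinary convergence implies statistical convergence) by systematically replacing the threshold $\varepsilon$ appearing there with the threshold $r + \varepsilon$ appearing in the definition of rough statistical convergence. The roughness degree $r$ stays fixed throughout, while $\varepsilon > 0$ varies.

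First, I would fix an arbitrary $\varepsilon > 0$ and invoke the hypothesis that $\{x_n\}$ is rough convergent to some $x \in X$ with roughness degree $r$. By Definition 2.7 this supplies an $N \in \mathbb{N}$ such that $S(x_n, x_n, x) < r + \varepsilon$ holds for every $n \geq N$. Consequently, the set
\[
A(\varepsilon) = \{n \in \mathbb{N} : S(x_n, x_n, x) \geq r + \varepsilon\}
\]
is contained in the finite set $\{1, 2, \ldots, N-1\}$, and hence $\delta(A(\varepsilon)) = 0$. Since $\varepsilon > 0$ was arbitrary, this is exactly the condition defining rough statistical convergence of $\{x_n\}$ to $x$ with roughness degree $r$, so $x_n \stackrel{r\text{-}st}{\longrightarrow} x$.

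There is essentially no genuine obstacle in this proof; it is a direct density-zero argument based on containment in a finite set. The only point worth flagging is that, since the rough limit need not be unique, one should not expect (and one is not asked) to identify a canonical limit point — any point witnessing rough convergence automatically witnesses rough statistical convergence, and the inclusion $LIM^{r} x_n \subseteq st\text{-}LIM^{r} x_n$ is therefore an immediate corollary of the stated theorem.
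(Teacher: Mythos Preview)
Your proof is correct and follows essentially the same approach as the paper: fix $\varepsilon>0$, use rough convergence to obtain an index beyond which $S(x_n,x_n,x)<r+\varepsilon$, conclude that the exceptional set is finite and hence has natural density zero. The additional remark about the inclusion $LIM^{r}x_{n}\subseteq st\text{-}LIM^{r}x_{n}$ is a correct corollary not stated in the paper, but the core argument is identical.
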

\begin{proof}
Let a sequence $\{\xi_n\}$ be rough convergent to $\xi$ in $(X,S)$.
Let $\varepsilon>0$ be arbitrary.
Then for $\varepsilon>0$, there exists $m \in \mathbb N$ such that $ S(\xi_n,\xi_n,\xi) < r+\varepsilon$, $\forall n \geq m$.
Then the set $A=\{n \in \mathbb N : S(\xi_n,\xi_n,\xi) \geq r+\varepsilon\} \subset \{1,2,3,......,(m-1) \}$.
So, $A$ is a finite set, and therefore, $\delta(A)=0$.
Hence $\{\xi_n\}$ is rough statistically convergent in $(X,S)$.
    
\end{proof}

\begin{remark}
    The converse of the above theorem may not be true. The following example shows that a rough statistically convergent sequence may not be rough convergent in an $S$-metric space. 
\end{remark}

\begin{example}
  Let $X=\mathbb R^2$ and $||.||$ be the Euclidean norm on $X$, then $S(x,y,z)=||x-z||+||y-z||$, \ $\forall x,y,z \in X$ is an $S$-metric on $X$.
  Let a sequence $\{\xi_{n}\}$ be defined by 
  \begin{equation*}
    \  \xi_{n}= \begin{cases}
    (p,p), & \text { if $n=p^2$ for $p \in \mathbb N$ }, \\
    (0,0), & \text { if $n \neq p^2$ and $n$ is even}, \\
    (1,1), & \text { if $n \neq p^2$ and $n$ is odd. }
                 \end{cases}
  \end{equation*}
 Let $\varepsilon>0$ be arbitrary and let $\xi_1=(0,0)$. \\
 Then $A_1=\{n \in \mathbb N : S(\xi_n,\xi_n,\xi_1) \geq 2\sqrt{2}+\varepsilon\} \subset \{1^2,2^2,3^2,4^2,5^2.....\}=M$ (say).\\
 Again, let $\xi_2=(1,1)$. Then 
  $A_2=\{n \in \mathbb N : S(\xi_n,\xi_n,\xi_2) \geq 2\sqrt{2}+\varepsilon\} \subset M$. \\
 Since $\delta(M)=0$, so $\delta(A_1)=\delta(A_2)=0$.\\
 So, $\{\xi_{n}\}$ is rough statistical convergent to (0,0) and (1,1) of roughness degree $2\sqrt{2}$.\\
 But \begin{equation*}
     S(\xi_n,\xi_n,\xi_1) = 2|| \xi_n-\xi_1 || = \begin{cases}
         2\sqrt{2}p, & \text{if $n=p^2$},\\
         0, & \text{if $n \neq p^2$ and $n$ is even}, \\
         2\sqrt{2}, & \text{if $n \neq p^2$ and $n$ is odd}.
     \end{cases}
 \end{equation*}
 So, when $n=p^2$, there dose not exist any positive integer $n_0$ such that the condition $S(\xi_n,\xi_n,\xi_1)< r+ \varepsilon$ for all $n \geq n_0$ holds.
 Hence $\{\xi_{n}\}$ is not rough convergent to (0,0) of any roughness degree $r>0$.
 Similarly, it can be shown that  $\{\xi_{n}\}$ is not rough convergent to (1,1).
\end{example}

\begin{definition} \cite{SUK1}
    The diameter of a set $A$ in an $S$-metric space $(X,S)$ is defined by 
    \begin{center}
    $diam(A)$ = $sup$ $\{ S(x,x,y) : x, y\in A \}$.
\end{center}
\end{definition}

\begin{theorem}
Let $\{x_{n}\}$ be a $r$-statistical convergent sequence in $(X,S)$. Then the diameter of $st-LIM^r x_n$ is not greater than $3r$ i.e. $dim(st-LIM^r x_n) \leq 3r$.  
\end{theorem}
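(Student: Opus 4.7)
The plan is to show that any two rough statistical limits of $\{x_n\}$ are within $3r$ of each other in the $S$-metric, so that the supremum of such distances is bounded by $3r$. Let $y, z \in st\text{-}LIM^r x_n$ be arbitrary, and fix $\varepsilon > 0$. By definition, the sets
\begin{equation*}
A_1(\varepsilon) = \{n \in \mathbb{N} : S(x_n,x_n,y) \geq r+\varepsilon\}, \quad A_2(\varepsilon) = \{n \in \mathbb{N} : S(x_n,x_n,z) \geq r+\varepsilon\}
\end{equation*}
both have natural density zero, hence so does their union. Its complement $(A_1(\varepsilon) \cup A_2(\varepsilon))^c$ then has density $1$ and is in particular non-empty.

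Next I would pick any index $k$ in this complement, which simultaneously gives $S(x_k,x_k,y) < r+\varepsilon$ and $S(x_k,x_k,z) < r+\varepsilon$. Then I would invoke the $S$-metric inequality (iii) of Definition 2.1 together with the symmetry from Lemma 2.1 to write
\begin{equation*}
S(y,y,z) \leq S(y,y,x_k) + S(y,y,x_k) + S(z,z,x_k) = 2\,S(x_k,x_k,y) + S(x_k,x_k,z) < 3(r+\varepsilon).
\end{equation*}
Since $\varepsilon > 0$ is arbitrary, letting $\varepsilon \to 0$ yields $S(y,y,z) \leq 3r$. Taking the supremum over $y, z \in st\text{-}LIM^r x_n$ gives $\mathrm{diam}(st\text{-}LIM^r x_n) \leq 3r$, which is the claim (interpreting the $\mathrm{dim}$ in the statement as the diameter defined just above the theorem).

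I do not anticipate a genuine obstacle — the argument is a direct density-intersection followed by one application of the $S$-metric triangle inequality. The only point requiring a little care is that the triangle inequality for an $S$-metric produces \emph{three} terms rather than two, which is precisely why the bound is $3r$ and not $2r$ as in the classical metric case, and it is the reason one must split the inequality as $S(y,y,x_k)+S(y,y,x_k)+S(z,z,x_k)$ rather than in some other way. The symmetry statement $S(x,x,y) = S(y,y,x)$ from Lemma 2.1 is used implicitly to convert the terms $S(y,y,x_k)$ and $S(z,z,x_k)$ into the form $S(x_k,x_k,\cdot)$ for which the density bounds apply.
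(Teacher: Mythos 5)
Your proof is correct and follows essentially the same argument as the paper: both rest on the fact that the union of the two exceptional sets has density zero, so its complement is non-empty, and then a single application of the $S$-metric triangle inequality (with Lemma 2.1) at an index in that complement gives $S(y,y,z)<3(r+\varepsilon)$. The only difference is cosmetic — the paper argues by contradiction with a specially chosen $\varepsilon\in\left(0,\frac{S(\xi,\xi,\eta)}{3}-r\right)$, while you argue directly with arbitrary $\varepsilon$ and let $\varepsilon\to 0$.
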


\begin{proof}
If possible, suppose that $diam(st-LIM^{r} x_{n}) > 3r$.
Then there exist elements $\xi,\eta \in st-LIM^{r} x_{n}$ such that $S(\xi,\xi,\eta)>3r$.
Take $ \varepsilon \in (0,\frac{S(\xi,\xi,\eta)}{3}-r)$. 
Since $\xi,\eta \in st-LIM^{r} x_{n}$, $\delta(A_1)=0$ and $\delta(A_2)=0$, where 
$ A_1= \{ n \in \mathbb N: S(x_n,x_n,\xi) \geq r+\varepsilon \}$ and
$ A_2= \{ n \in \mathbb N: S(x_n,x_n,\eta) \geq r+\varepsilon \}$.
So, from the property of natural density, we can write $d(A^{c}_1 \cap A^{c}_2)=1$.
Now, for all $n \in A^{c}_1 \cap A^{c}_2$
 \begin{equation*}
       \begin{split}
S(\xi,\xi,\eta) &\leq S(\xi,\xi,x_n)+S(\xi,\xi,x_n)+
S(\eta,\eta,x_n)\\
       &= S(x_n,x_n,\xi)+S(x_n,x_n,\xi)+S(x_n,x_n,\eta), \ \text{by Lemma[2.1]} \\
       &< (r+\varepsilon)+(r+\varepsilon)+(r+\varepsilon)\\
       &=3(r+\varepsilon)\\
       &=3r+3\varepsilon\\
       &=3r+3 \{\frac{S(\xi,\xi,\eta)}{3}-r \} \\ 
       &=3r+S(\xi,\xi,\eta)-3r \\
       &=S(\xi,\xi,\eta), \ \text{which is a contradiction}.    
       \end{split}
   \end{equation*}    
Hence the diameter of $st-LIM^{r} x_{n}$ is not greater than $3r$ i.e. $diam(st-LIM^{r} x_{n}) \leq 3r$. 
\end{proof}

\begin{theorem}
    If a sequence $\{x_{n}\}$ statistically converges to $x'$ in an $S$-metric space $(X,S)$, then $st-LIM^{r}x_{n}=B_{S}[x',r]$.
\end{theorem}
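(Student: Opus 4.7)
The plan is to prove the two set inclusions $B_S[x', r] \subseteq st-LIM^{r}x_{n}$ and $st-LIM^{r}x_{n} \subseteq B_S[x', r]$ separately.

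For the forward inclusion, I would take $y \in B_S[x', r]$, so that $S(y, y, x') \leq r$, and bound $S(x_n, x_n, y)$ via axiom (iii) of Definition 2.1 with $a = x'$, which gives $S(x_n, x_n, y) \leq 2S(x_n, x_n, x') + r$. For any $\varepsilon > 0$ the set $\{n \in \mathbb{N} : S(x_n, x_n, y) \geq r + \varepsilon\}$ is then contained in $\{n \in \mathbb{N} : S(x_n, x_n, x') \geq \varepsilon/2\}$, which has density zero by the statistical convergence $x_n \to x'$, so $y \in st-LIM^{r}x_{n}$.

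For the reverse inclusion, I would take $y \in st-LIM^{r}x_{n}$ and aim to show $S(y,y,x') \leq r$. The crucial manoeuvre is to rewrite $S(y, y, x') = S(x', x', y)$ using Lemma 2.1 \emph{before} applying axiom (iii) with $a = x_n$; this yields $S(y, y, x') \leq 2S(x_n, x_n, x') + S(x_n, x_n, y)$, so that the factor of $2$ lands on the statistically vanishing quantity $S(x_n,x_n,x')$ rather than on the rough one $S(x_n,x_n,y)$. Given $\varepsilon > 0$, I would pick $n$ outside the union of the two density-zero sets $\{n : S(x_n, x_n, x') \geq \varepsilon/3\}$ and $\{n : S(x_n, x_n, y) \geq r + \varepsilon/3\}$ (which is possible since the union still has density zero), obtaining $S(y, y, x') < r + \varepsilon$, and then let $\varepsilon \downarrow 0$.

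The main obstacle is the asymmetric factor of $2$ in axiom (iii): the naive estimate $S(y, y, x') \leq 2S(x_n, x_n, y) + S(x_n, x_n, x')$ only forces $S(y, y, x') \leq 2r$ and would merely give the weaker containment $st-LIM^{r}x_{n} \subseteq B_S[x', 2r]$. Permuting the roles of $y$ and $x'$ via Lemma 2.1 before invoking the triangle inequality is precisely what absorbs the factor of $2$ into the arbitrary error term, and it is the only subtle point in an otherwise routine argument.
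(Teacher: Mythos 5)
Your proposal is correct and follows essentially the same route as the paper: both inclusions are proved by the same triangle-inequality estimates, with the symmetry $S(y,y,x')=S(x',x',y)$ (Lemma 2.1) used exactly as in the paper's argument so that the doubled term falls on $S(x_n,x_n,x')$, and $n$ is chosen in the density-one complement of the union of the two exceptional sets before letting $\varepsilon \downarrow 0$. The only differences are cosmetic (your $\varepsilon/2$ versus the paper's $\varepsilon/3$ split in the first inclusion).
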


\begin{proof}
Let the sequence $\{x_{n}\}$ be statistically convergent to $x'$.
Let $\varepsilon >0$.
Then $\delta(A)=0$, where $A=\{n \in \mathbb N: S(x_n,x_n,x') \geq \frac{\varepsilon}{3} \}$. \\
 Let $y \in B_{S}[x',r]= \{ y \in X:S(y,y,x')\leq r \}$.
 Then using the  triangularity properties of $S$-metric space, we have for $n \in A^c$
 \begin{equation*}
    \begin{split}
 S(x_n,x_n,y) &\leq S(x_n,x_n,x')+S(x_n,x_n,x')+S(y,y,x')\\
              & < \frac{\varepsilon}{3}+\frac{\varepsilon}{3}+r\\
              & < r+\varepsilon 
    \end{split}
\end{equation*}
So $\{n \in \mathbb N: S(x_n,x_n,y) \geq r+\varepsilon\} \subset A$.
Therefore, $\delta(\{n \in \mathbb N: S(x_n,x_n,y) \geq r+ \varepsilon \})=0$.
So, $y \in st-LIM^{r}x_{n}$. Hence 
\begin{equation}
    B_{S}[x', r] \subset st-LIM^{r}x_{n}.
\end{equation}
Again, let $y \in st-LIM^{r}x_{n}$ and let $\varepsilon>0$ be arbitrary.
Then $ \delta(B)=0$, where $B=\{n \in \mathbb N: S(x_n,x_n,y) \geq r+\frac{\varepsilon}{3} \}$.
Now, let $ n \in A^c \cap B^c$
\begin{equation*}
    \begin{split}
S(x',x',y) & \leq S(x',x',x_n)+S(x',x',x_n)+S(y,y,x_n)\\
           & = S(x_n,x_n,x')+S(x_n,x_n,x')+S(x_n,x_n,y)\\
           & < \frac{\varepsilon}{3}+\frac{\varepsilon}{3}+r+\frac{\varepsilon}{3}\\
           & =(r+\varepsilon)
    \end{split}
\end{equation*}
So $S(y,y,x')<r+\varepsilon$. Since $\varepsilon>0$ is arbitrary, $S(y,y,x') \leq r$
So, $y \in B_{S}[x',r]$. Hence 
\begin{equation}
    st-LIM^{r}x_{n} \subset B_{S}[x', r]
\end{equation}
From (4.1) and (4.2), we get
$st-LIM^{r}x_{n}=B_{S}[x', r]$.
\end{proof}

\begin{theorem}
   Let $\{x_{n}\}$ be a $r$-statistical convergent sequence in an $S$-metric space $(X,S)$ and $\{\xi_{n}\}$ be a convergent sequence in $st-LIM^{r}x_{n}$ converging to $\xi$. Then $\xi$ must belongs to $st-LIM^{r}x_{n}$.
\end{theorem}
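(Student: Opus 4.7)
Fix an arbitrary $\varepsilon>0$. The goal is to show the set $\{n\in\mathbb{N}: S(x_n,x_n,\xi)\geq r+\varepsilon\}$ has natural density zero, thereby placing $\xi$ in $st\text{-}LIM^{r}x_{n}$. The strategy is to use an appropriately chosen term $\xi_{n_0}$ of the sequence $\{\xi_n\}$ as a "bridge": since $\xi_n\to\xi$, we can force $\xi_{n_0}$ to be close to $\xi$, and since $\xi_{n_0}\in st\text{-}LIM^{r}x_{n}$, we have density-zero control on $n$ with $S(x_n,x_n,\xi_{n_0})\geq r+\varepsilon/3$.

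First, using the ordinary convergence $\xi_n\to\xi$ in $(X,S)$ together with Lemma 2.1, choose $n_0\in\mathbb{N}$ with $S(\xi,\xi,\xi_{n_0})<\varepsilon/3$. Then, since $\xi_{n_0}\in st\text{-}LIM^{r}x_{n}$, the set
\begin{equation*}
A=\{n\in\mathbb{N}: S(x_n,x_n,\xi_{n_0})\geq r+\varepsilon/3\}
\end{equation*}
satisfies $\delta(A)=0$. The key step is then to apply axiom $(iii)$ of the $S$-metric with $a=\xi_{n_0}$ and the \emph{limit} $\xi$ in the duplicated slot, followed by Lemma 2.1:
\begin{equation*}
S(x_n,x_n,\xi)=S(\xi,\xi,x_n)\leq 2\,S(\xi,\xi,\xi_{n_0})+S(x_n,x_n,\xi_{n_0}).
\end{equation*}
For every $n\in A^{c}$ the right side is strictly less than $2(\varepsilon/3)+(r+\varepsilon/3)=r+\varepsilon$, so $\{n:S(x_n,x_n,\xi)\geq r+\varepsilon\}\subset A$, giving density zero as required.

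The main obstacle is the factor of $2$ that the $S$-metric triangle inequality produces on the doubled vertex. Naively duplicating $x_n$ (as is done in Theorems 3.2 and 4.2) yields a $2\,S(x_n,x_n,\xi_{n_0})$ term bounded only by $2(r+\varepsilon/3)$, which inflates the roughness to $2r$ and breaks the argument. Duplicating $\xi$ instead and invoking the symmetry of Lemma 2.1 moves the coefficient $2$ onto the small quantity $S(\xi,\xi,\xi_{n_0})$, preserving a single copy of the $r$-bounded term $S(x_n,x_n,\xi_{n_0})$; once this choice is made, the splitting $\varepsilon=2(\varepsilon/3)+(\varepsilon/3)$ finishes the proof.
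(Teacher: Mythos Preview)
Your proof is correct and is essentially identical to the paper's: both pick $n_0$ with $S(\xi_{n_0},\xi_{n_0},\xi)<\varepsilon/3$, use $\xi_{n_0}\in st\text{-}LIM^{r}x_n$ to get $\delta(\{n:S(x_n,x_n,\xi_{n_0})\geq r+\varepsilon/3\})=0$, and then apply axiom (iii) with the \emph{limit point} $\xi$ in the duplicated slot (together with Lemma~2.1) to obtain $S(x_n,x_n,\xi)\leq 2S(\xi,\xi,\xi_{n_0})+S(x_n,x_n,\xi_{n_0})<r+\varepsilon$ on the complement. Your explicit remark about why the factor~$2$ must fall on the $\varepsilon/3$ term rather than the $r+\varepsilon/3$ term is a nice clarification the paper leaves implicit.
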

\begin{proof}
If $st-LIM^{r}x_{n}=\phi$, then there is nothing to prove.
Let us consider the case when $st-LIM^{r}x_{n} \neq \phi$.
Let $\{\xi_{n}\}$ be a sequence in $st-LIM^{r}x_{n}$ such that $\{\xi_{n}\}$ converges to $\xi$.
Let $\varepsilon>0$ be arbitrary. So for $\varepsilon>0$, there exists $n_1 \in \mathbb N$ such that $S(\xi_n,\xi_n,\xi) < \frac{\varepsilon}{3}$, \ $\forall n \geq n_1$.
Now, let us choose an $n_0 \in \mathbb N$ such that $n_0 > n_1$.
Then we can write $S(\xi_{n_0},\xi_{n_0},\xi) < \frac{\varepsilon}{3}$.
Since $\{\xi_{n}\} \subset st-LIM^{r}x_{n}$, so we have $\xi_{n_0} \in st-LIM^{r}x_{n} $ and $\delta(\{n \in \mathbb N: S(x_n,x_n,\xi_{n_0}) \geq r+ \frac{\varepsilon}{3}\})=0$.
Now, we show that 
\begin{equation}
\{n \in \mathbb N: S(x_n,x_n,\xi) < r+ \varepsilon\} \supseteq \{n \in \mathbb N: S(x_n,x_n,\xi_{n_0})< r+ \frac{\varepsilon}{3}\}   
\end{equation}
Let $k \in \{n \in \mathbb N: S(x_n,x_n,\xi_{n_0}) < r+ \frac{\varepsilon}{3}\}$.\\
Then $S(x_k,x_k,\xi_{n_0}) < r+ \frac{\varepsilon}{3}$.
So, we can write
\begin{equation*}
    \begin{split}
         S(\xi,\xi,x_k) & \leq S(\xi,\xi,\xi_{n_0})+S(\xi,\xi,\xi_{n_0})+S(x_k,x_k,\xi_{n_0})\\
         & =S(\xi_{n_0},\xi_{n_0},\xi)+S(\xi_{n_0},\xi_{n_0},\xi)+S(x_k,x_k,\xi_{n_0}) \\
         & < \frac{\varepsilon}{3}+\frac{\varepsilon}{3}+r+ \frac{\varepsilon}{3} \\
         & = r+\varepsilon
    \end{split}
\end{equation*}
Hence $k \in \{n \in \mathbb N: S(\xi,\xi,x_n) < r+ \varepsilon\} = \{n \in \mathbb N: S(x_n,x_n,\xi) < r+ \varepsilon\}$.
Therefore, (3.3) holds.
Since the set on the right-hand side of (4.3) has natural density 1 and so, the natural density of the set on the left-hand side of (4.3) is equal to 1. So, we get 
$\delta(\{n \in \mathbb N :S(x_n,x_n,\xi) \geq r+ \varepsilon \})=0$.
Hence $\xi \in st-LIM^{r}x_{n}$. 
Therefore, the theorem follows.
\end{proof} 

\begin{theorem}
Every $S$-metric space $(X,S)$ is first countable.
\end{theorem}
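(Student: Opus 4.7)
The plan is to exhibit an explicit countable local base at an arbitrary point $x \in X$, namely the collection $\mathcal{B}_x = \{B_S(x, 1/n) : n \in \mathbb{N}\}$. Since the paper has already recorded that the family of open balls is a base for the topology induced by $S$, every open neighborhood $U$ of $x$ contains an open ball $B_S(y, r)$ with $x \in B_S(y, r) \subseteq U$. So the whole task reduces to showing that for such a $y$ and $r$ one can find $n \in \mathbb{N}$ with $B_S(x, 1/n) \subseteq B_S(y, r)$.

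First I would fix $x$, $U$, and the containing ball $B_S(y, r)$, and set $d := S(x,x,y)$, so that $d < r$ by the choice of the ball. Next I would choose $n \in \mathbb{N}$ large enough that $\tfrac{2}{n} < r - d$; this is where the factor $2$ from the $S$-metric triangle inequality enters the argument, which is the only mild subtlety. Then for any $z \in B_S(x, 1/n)$ I would apply axiom (iii) of Definition 2.1 with $a = x$, together with Lemma 2.1, to estimate
\begin{equation*}
S(z, z, y) \;\leq\; S(z, z, x) + S(z, z, x) + S(y, y, x) \;=\; 2\,S(z, z, x) + S(x, x, y) \;<\; \frac{2}{n} + d \;<\; r,
\end{equation*}
which shows $z \in B_S(y, r)$, hence $B_S(x, 1/n) \subseteq B_S(y, r) \subseteq U$.

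Finally, since each $B_S(x, 1/n)$ is itself open (as the paper has remarked), and $x \in B_S(x, 1/n)$, the family $\mathcal{B}_x$ consists of open neighborhoods of $x$; combined with the containment just proved, this verifies that $\mathcal{B}_x$ is a countable local base at $x$. As $x$ was arbitrary, $(X,S)$ is first countable.

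The main (and only) obstacle is handling the factor $2$ in the triangle inequality for $S$-metrics — this differs from the ordinary metric case, where one would take $n > 1/(r-d)$; here one must take $n > 2/(r-d)$. Apart from this, the argument is entirely routine, mirroring the standard proof of first countability in ordinary metric spaces.
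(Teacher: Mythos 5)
Your proof is correct and takes essentially the same route as the paper: both exhibit the balls $B_S(x,1/n)$, $n \in \mathbb{N}$, as a countable local base at each point $x$, using the fact that the open balls form a base for the induced topology. Your explicit recentering estimate $S(z,z,y) \leq 2\,S(z,z,x) + S(x,x,y)$ with $2/n < r - d$ fills in a step the paper merely asserts (it passes directly to a ball centered at $x$ inside the open set), and it incidentally sidesteps the paper's slightly misphrased closing line that the global family $u=\{B_S(x,1/p): x\in X,\ p\in\mathbb{N}\}$ is countable --- what is actually countable is the local base at each fixed $x$, which is exactly what you verify.
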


\begin{proof}
Let $\tau(S)$ be the topology generated by the base $v$ where $v=\{ B_S(x,\varepsilon): x \in X, \varepsilon>0 \}$.
Let us consider $u=\{ B_S(x,\frac{1}{p}): x \in X, p \in \mathbb N \}$.
Then $u$ is a basis for $\tau(S)$.
For let, $A \in \tau(S)$ and $x \in A$ be arbitrary element. Then, since $v$ is a basis for $\tau(S)$, there exists $\varepsilon>0$ such that $x \in B_S(x, \varepsilon) \subset A$. Choose $p \in \mathbb N$, so that $\frac{1}{p}<\varepsilon$. Then $B_S(x,\frac{1}{p}) \subset B_S(x, \varepsilon)$. Thus $x \in B_S(x,\frac{1}{p}) \subset B_S(x, \varepsilon) \subset A$. So $u$ forms a basis for $\tau(S)$. Since $u$ is countable, therefore $(X,S)$ is first countable.
\end{proof}

\begin{corollary}
Let $\{x_{n}\}$ be a $r$-statistical convergent sequence in an $S$-metric space $(X,S)$. Then $st-LIM^{r}x_{n}$ is a closed set for any roughness degree $r \geq 0$.
\end{corollary}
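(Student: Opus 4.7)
The plan is to combine the preceding two results: the first countability of $(X,S)$ (Theorem 4.4) and the sequential closure property of $st\text{-}LIM^{r}x_{n}$ established in Theorem 4.3. In a first countable topological space, a subset is closed if and only if it is sequentially closed, i.e., contains the limit of every convergent sequence drawn from it. Thus the corollary will follow almost immediately from these two facts.

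More concretely, I would first dispose of the trivial case $st\text{-}LIM^{r}x_{n}=\phi$, which is vacuously closed. Assuming otherwise, I would take an arbitrary point $\xi$ in the closure of $st\text{-}LIM^{r}x_{n}$ inside $(X,S)$ and invoke the first countability proved in Theorem 4.4: at each point of $X$ there is a countable local base of open balls $B_{S}(\xi,\tfrac{1}{p})$, $p\in\mathbb{N}$, so one can extract a sequence $\{\xi_{k}\}\subset st\text{-}LIM^{r}x_{n}$ with $\xi_{k}\to\xi$ in the topology of $(X,S)$, which in the $S$-metric language means $S(\xi_{k},\xi_{k},\xi)\to 0$.

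Then I would simply apply Theorem 4.3, which asserts that the limit of any convergent sequence lying inside $st\text{-}LIM^{r}x_{n}$ must itself belong to $st\text{-}LIM^{r}x_{n}$. This yields $\xi\in st\text{-}LIM^{r}x_{n}$, so the set contains all of its limit points and is therefore closed. There is no real obstacle here; the only subtle point is to make explicit the standard fact that first countability upgrades ``sequentially closed'' to ``closed,'' which is exactly what connects the hypothesis (Theorem 4.3) to the conclusion of the corollary, and this is precisely why Theorem 4.4 was stated just before. The argument is uniform in $r\geq 0$, so it covers both the rough case $r>0$ and the statistical case $r=0$.
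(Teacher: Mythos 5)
Your proposal is correct and follows essentially the same route as the paper, which likewise deduces the corollary by combining the first countability of $(X,S)$ with the theorem that the limit of any convergent sequence lying in $st\text{-}LIM^{r}x_{n}$ belongs to $st\text{-}LIM^{r}x_{n}$. Your write-up merely makes explicit the extraction of a sequence from the countable local base of balls $B_{S}(\xi,\tfrac{1}{p})$, which the paper leaves implicit.
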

\begin{proof}
 Since the $S$-metric space $(X,S)$ is first countable, the result follows directly from Theorem (4.4).   
\end{proof}

\begin{theorem}
Let $(X,S)$ be an $S$-metric space. Then a sequence $\{x_{n}\}$ is statistically bounded in $(X,S)$ if and only if there exists a non-negative real number $r$ such that $st-LIM^{r}x_{n} \neq \phi$.     
\end{theorem}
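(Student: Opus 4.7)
My plan is to prove both implications directly, exploiting the fact that the two conditions are essentially equivalent reformulations of the same statement: statistical boundedness says that $S(x_n,x_n,u)$ is bounded by some constant on a set of density one, while membership of some $x^{*}$ in $st-LIM^{r}x_{n}$ says that $S(x_n,x_n,x^{*})$ is bounded by $r+\varepsilon$ on a set of density one for every $\varepsilon>0$. Each direction is therefore a one-line manipulation: in one we absorb ``$+\varepsilon$'' into the constant, in the other we take $r$ to be the boundedness constant.

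For the ``only if'' direction, assume $\{x_{n}\}$ is statistically bounded. By Definition 3.4 there exist $u \in X$ and $B>0$ with $\delta(\{n \in \mathbb{N} : S(x_n,x_n,u) \geq B\}) = 0$. Set $r := B$. For every $\varepsilon>0$ the set $\{n \in \mathbb{N} : S(x_n,x_n,u) \geq r+\varepsilon\}$ is contained in $\{n \in \mathbb{N} : S(x_n,x_n,u) \geq B\}$, and therefore has natural density zero. Hence $u \in st-LIM^{r}x_{n}$, so $st-LIM^{r}x_{n} \neq \phi$.

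For the ``if'' direction, suppose $st-LIM^{r}x_{n} \neq \phi$ for some $r \geq 0$, and pick $x^{*} \in st-LIM^{r}x_{n}$. Specializing the definition of $r$-statistical convergence to $\varepsilon = 1$ (any fixed positive value works), we get $\delta(\{n \in \mathbb{N} : S(x_n,x_n,x^{*}) \geq r+1\}) = 0$. Taking $u := x^{*}$ and $B := r+1$ this is precisely statistical boundedness with reference point $x^{*}$. If the definition of statistical boundedness is read as requiring the bound for every $u \in X$, then for an arbitrary $u$ one invokes the $S$-metric inequality $S(x_n,x_n,u) \leq 2\,S(x_n,x_n,x^{*}) + S(u,u,x^{*})$ (together with Lemma 2.1) to replace $B$ by $B' := 2B + S(u,u,x^{*})$; the relevant set still has density zero.

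I do not anticipate a genuine obstacle here: both implications reduce to a subset inclusion between sets of density zero. The only minor subtlety is clarifying that the choice of reference point $u$ in the definition of statistical boundedness is immaterial, which is settled immediately by the $S$-metric triangle-type inequality used above.
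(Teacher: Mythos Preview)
Your proposal is correct and follows essentially the same route as the paper: in both directions the argument is reduced to a subset inclusion between density-zero sets, taking $r$ equal to the boundedness constant in one direction and $B=r+\varepsilon$ (you choose $\varepsilon=1$, the paper keeps $\varepsilon$ generic) in the other. Your extra remark that the reference point $u$ can be changed via the inequality $S(x_n,x_n,u)\le 2\,S(x_n,x_n,x^{*})+S(u,u,x^{*})$ is a welcome clarification, since the paper's definition of statistical boundedness reads ``for any fixed $u\in X$'' but its proof only produces the bound for the single point $p=x^{*}$.
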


\begin{proof}
Let $p \in X$ be a fixed element in $X$.
Since the sequence $\{x_{n}\}$ is statistically bounded in $(X,S)$, so there exists a positive real number $M$ such that $\delta(\{ n \in \mathbb N : S(x_n,x_n,p) \geq M \})=0$.
Let $\varepsilon>0$ be arbitrary and $r=M$.
Then $M<r+\varepsilon$. So we have the inclusion
\begin{equation}
 \{ n \in \mathbb N : S(x_n,x_n,p) < r+\varepsilon \} \supset \{ n \in \mathbb N : S(x_n,x_n,p) < M \}    
\end{equation}
Hence 
$\{ n \in \mathbb N : S(x_n,x_n,p) \geq r+\varepsilon \} \subset \{ n \in \mathbb N : S(x_n,x_n,p) \geq M \}$. 
Since $\delta(\{ n \in \mathbb N : S(x_n,x_n,p) \geq M \})=0$, it follows that $\delta(\{ n \in \mathbb N : S(x_n,x_n,p) \geq r+\varepsilon \})=0$. Therefore $p \in st-LIM^{r}x_{n} $ i.e. $st-LIM^{r}x_{n} \neq \phi$. \\

Conversely, let $st-LIM^{r}x_{n} \neq \phi$ and let $p$ be a $r$-limit of $\{x_{n}\}$. Then for $\varepsilon >0$, 
$\delta(\{ n \in \mathbb N : S(x_n,x_n,p) \geq r+\varepsilon \})=0$. Let $A=\{ n \in \mathbb N : S(x_n,x_n,p) \geq r+\varepsilon \}$ and $ M=r+\varepsilon$. Then $\delta(A)=0$ and if $ n \in A^c$, then $S(x_n,x_n,p)<r+\varepsilon=M$.
So, $ n \in \{ n \in \mathbb N:S(x_n,x_n,p) < M \} $.
This implies $ A^c \subset \{ n \in \mathbb N:S(x_n,x_n,p) < M \}$.
So, $ \{ n \in \mathbb N:S(x_n,x_n,p) \geq M \} \subset A$ and hence $\delta(\{ n \in \mathbb N : S(x_n,x_n,p) \geq M \})=0$.
Therefore $\{x_{n}\}$ is statistically bounded. 
\end{proof}

\begin{theorem}
Let $\{ x_{n_{p}}\}$ be a subsequence of $\{x_{n}\}$ such that $\delta(\{ n_1, n_2,....... \})=1$, then $st-LIM^{r}x_{n} \subseteq st-LIM^{r}x_{n_{p}}$. 
\end{theorem}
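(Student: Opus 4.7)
The plan is to fix an arbitrary $x \in st-LIM^{r}x_{n}$ and show $x$ lies in $st-LIM^{r}x_{n_{p}}$. For a given $\varepsilon>0$, I would introduce the two ``bad'' index sets
\[
A_\varepsilon = \{n \in \mathbb{N} : S(x_n,x_n,x) \geq r+\varepsilon\} \quad \text{and} \quad B_\varepsilon = \{p \in \mathbb{N} : S(x_{n_p},x_{n_p},x) \geq r+\varepsilon\},
\]
note that $\delta(A_\varepsilon)=0$ by the assumption $x \in st-LIM^{r}x_{n}$, and reduce the theorem to proving $\delta(B_\varepsilon)=0$.

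The main step is to pass from natural density measured along the ambient index $n$ to natural density measured along the subsequence index $p$. Writing $K=\{n_1,n_2,\ldots\}$, the fact that $n_1<n_2<\cdots$ enumerates $K$ in increasing order gives $|K \cap \{1,\ldots,n_m\}|=m$ for every $m$, so the hypothesis $\delta(K)=1$ immediately forces $\lim_{m\to\infty} n_m/m = 1$. This is the only place where the density-$1$ assumption enters, and it is the principal leverage point of the whole argument.

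Once this index-conversion is in hand, I would finish by observing that $p \in B_\varepsilon$ precisely when $n_p \in A_\varepsilon$, so, using $\{n_1,\ldots,n_m\} \subseteq \{1,\ldots,n_m\}$,
\[
|B_\varepsilon \cap \{1,\ldots,m\}| \;=\; |A_\varepsilon \cap \{n_1,\ldots,n_m\}| \;\leq\; |A_\varepsilon \cap \{1,\ldots,n_m\}|.
\]
Dividing by $m$ and splitting the right side as $\bigl(|A_\varepsilon \cap \{1,\ldots,n_m\}|/n_m\bigr)\cdot(n_m/m)$, the first factor tends to $\delta(A_\varepsilon)=0$ and the second to $1$, so the quotient tends to $0$. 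This yields $\delta(B_\varepsilon)=0$ and hence $x \in st-LIM^{r}x_{n_p}$, as required.

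I do not expect a serious obstacle: the computation is a standard density-transfer under a density-$1$ subsequence, and nothing specific to the $S$-metric structure is needed beyond the definition of $r$-statistical convergence. The only point that demands a moment of care is the equivalence of ``density $0$ in $n$-indexing'' with ``density $0$ in $p$-indexing,'' which is exactly what the limit $n_m/m \to 1$ supplies.
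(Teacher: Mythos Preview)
Your proposal is correct and shares the paper's core idea—both bound the ``bad'' index set for the subsequence by the density-zero set $A_\varepsilon$ (the paper's $A$) coming from $x\in st\text{-}LIM^r x_n$. The difference lies in how the subsequence is indexed. The paper works with the set of \emph{original} indices $\{n_p\in P:S(x_{n_p},x_{n_p},x)\ge r+\varepsilon\}$, notes it is contained in $A\cup P^c$, and concludes from $\delta(A\cup P^c)\le\delta(A)+\delta(P^c)=0$. You instead measure density in the \emph{reindexed} variable $p$ and make the transfer explicit via $n_m/m\to 1$. Your version is the more scrupulous one, since $r$-statistical convergence of the sequence $\{x_{n_p}\}_{p\ge 1}$ is, by definition, a statement about density in $p$; the paper's conclusion tacitly identifies density along $n_p$ with density along $p$, and the limit $n_m/m\to 1$ you derive from $\delta(P)=1$ is exactly what licenses that identification.
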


\begin{proof}
Let  $\{ x_{{n}_{p}}\}$ be a subsequence of $\{x_{n}\}$ and $ x \in st-LIM^{r}x_{n}$. 
Let $ \varepsilon >0$ be arbitrary, then the set 
$A=\{ n \in \mathbb N : S(x_n,x_n,x) \geq r+\varepsilon \}$ has density zero. 
So, $\delta(A^c)=1$. 
Since the set $P= \{ n_1, n_2,....... \}$ has density 1, $A^c \cap P \neq \phi$.
For if $A^c \cap P= \phi$, then $P \subset A$ and so $\delta(P)=0$, a contradiction, since $\delta(A)=0$.
Let $n_k \in A^c \cap P$. Then $S(x_{{n}_{k}},x_{{n}_{k}},x) < r+\varepsilon$.
So, $\{ n_p \in P: S(x_{{n}_{p}},x_{{n}_{p}},x) \geq r+\varepsilon  \} \subset A \cup P^c$. This implies that $ \delta( \{ n_p \in P: S(x_{{n}_{p}},x_{{n}_{p}},x) \geq r+\varepsilon  \} )=0$, since $\delta(A \cup P^c) \leq \delta(A)+\delta(P^c)=0+0=0$. 
Therefore $ x \in st-LIM^{r}x_{n_{p}}$. Hence $st-LIM^{r}x_{n} \subseteq st-LIM^{r}x_{n_{p}}$.
\end{proof}

\begin{theorem}
Let $\{\xi_{n}\}$ and $\{\eta_{n}\}$ be two sequences in $(X,S)$ such that $S(\xi_{n},\xi_n,\eta_{n}) \longrightarrow 0$ as $n \longrightarrow \infty$. Then
$\{\xi_{n}\}$ is $r$-statistically convergent to $\xi$ if and only if $\{\eta_{n}\}$ is $r$-statistically convergent to $\xi$.   
\end{theorem}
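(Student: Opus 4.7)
The plan is to reduce the equivalence to a single implication by exploiting symmetry, and then to control $S(\eta_n,\eta_n,\xi)$ by a triangle-inequality bound in which the correction term $S(\xi_n,\xi_n,\eta_n)$ is eventually small. By Lemma 2.1, $S(\xi_n,\xi_n,\eta_n) = S(\eta_n,\eta_n,\xi_n)$, so the hypothesis $S(\xi_n,\xi_n,\eta_n) \to 0$ is symmetric in the two sequences. Hence it suffices to prove one direction, say that if $\{\xi_n\}$ is $r$-statistically convergent to $\xi$, then so is $\{\eta_n\}$; the converse follows by interchanging the roles of the two sequences.

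For the forward direction, I would fix $\varepsilon > 0$ and apply axiom $(iii)$ of Definition 2.1 with $(x,y,z) = (\eta_n,\eta_n,\xi)$ and $a = \xi_n$, then use Lemma 2.1 to rewrite the terms, obtaining
\[
S(\eta_n,\eta_n,\xi) \;\leq\; 2\,S(\xi_n,\xi_n,\eta_n) + S(\xi_n,\xi_n,\xi).
\]
Since $S(\xi_n,\xi_n,\eta_n) \to 0$, choose $N \in \mathbb{N}$ such that $S(\xi_n,\xi_n,\eta_n) < \varepsilon/3$ for all $n \geq N$. Then for such $n$, whenever $S(\xi_n,\xi_n,\xi) < r + \varepsilon/3$, the above inequality forces $S(\eta_n,\eta_n,\xi) < r + \varepsilon$. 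Contrapositively, this gives the inclusion
\[
\{n \in \mathbb{N} : S(\eta_n,\eta_n,\xi) \geq r+\varepsilon\} \;\subseteq\; \{1,2,\ldots,N-1\} \cup \{n \in \mathbb{N} : S(\xi_n,\xi_n,\xi) \geq r + \varepsilon/3\}.
\]
The first set on the right is finite, hence has natural density zero, and the second has density zero by the assumed $r$-statistical convergence of $\{\xi_n\}$ to $\xi$. Therefore the set on the left has density zero, which is precisely $\{\eta_n\}$ being $r$-statistically convergent to $\xi$.

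The argument is essentially a standard $\varepsilon/3$ split combined with the factor of $2$ coming from the $S$-metric triangle inequality; the only mild care required is to choose $\varepsilon/3$ as the threshold in the hypothesis on $\{\xi_n\}$ (rather than $\varepsilon/2$ or $\varepsilon$) so that $2(\varepsilon/3) + \varepsilon/3 = \varepsilon$ absorbs cleanly, which is what makes the key inclusion work. There is no substantive obstacle here beyond bookkeeping of the constants, and the reverse implication is obtained verbatim by swapping $\xi_n$ with $\eta_n$ and invoking Lemma 2.1 once more.
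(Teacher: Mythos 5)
Your proposal is correct and follows essentially the same route as the paper: the same $\varepsilon/3$ split, the same use of axiom $(iii)$ with $a=\xi_n$ together with Lemma 2.1 to get $S(\eta_n,\eta_n,\xi)\leq 2S(\xi_n,\xi_n,\eta_n)+S(\xi_n,\xi_n,\xi)$, and the same density argument writing the exceptional set inside the union of a finite set and the density-zero set for $\{\xi_n\}$. Your explicit symmetry remark handling the converse is just a tidier version of the paper's ``converse part is similar.''
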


\begin{proof}
 Let $\{\xi_{n}\}$ be $r$-statistically convergent to $\xi$. 
 Let $\varepsilon>0$. 
 Then for $\varepsilon>0$, $\delta(A)=0$, where $A=\{n \in \mathbb N: S(\xi_{n},\xi_n,\xi) \geq r+\frac{\varepsilon}{3} \}$. \\
 Since $S(\xi_{n},\xi_n,\eta_{n}) \longrightarrow 0$ as $n \longrightarrow \infty$, for $\varepsilon>0$, $ \exists \ k \in \mathbb N$ such that $S(\xi_{n},\xi_n,\eta_{n})\leq \frac{\varepsilon}{3}$, when $n \geq k$. \\
Since $\delta(A)=0$, $\delta(A^c)=1$.
Again, $\delta( \{1,2,......k \})=0$, so $ \delta( \{1,2,......k \}^c)=1$. 
So, $A^c \cap \{1,2,......k \}^c \neq \phi $.
If $ n \in A^c \cap \{1,2,......k \}^c $, then
\begin{equation*}
    \begin{split}
S(\eta_n,\eta_n,\xi) & \leq S(\eta_n,\eta_n,\xi_n)+S(\eta_n,\eta_n,\xi_n)+S(\xi,\xi,\xi_n) \\
    & = S(\xi_n,\xi_n,\eta_n)+S(\xi_n,\xi_n,\eta_n)+S(\xi_n,\xi_n,\xi) \\
    & < \frac{\varepsilon}{3} +\frac{\varepsilon}{3} +(r+ \frac{\varepsilon}{3}) \\
    & = r + \varepsilon 
    \end{split}
\end{equation*}
Hence $ A^c \cap \{1,2,......k \}^c \subset \{n \in \mathbb N : S(\eta_n,\eta_n,\xi) < r+\varepsilon \} $.
So, $ \{n \in \mathbb N : S(\eta_n,\eta_n,\xi) \geq r+\varepsilon \} \subset (A^c \cap \{1,2,......k \}^c)^c = A \cup \{1,2,......k \} $.
Now, since $ \delta(A \cup \{1,2,......k \}) \leq \delta(A)+\delta(\{1,2,......k \}) = 0+0=0$, so
 $\delta(\{n \in \mathbb N: S(\eta_n,\eta_n,\xi) \geq r+\varepsilon \})=0$.
Therefore, $\{\eta_{n}\}$ is $r$-statistical convergent to $\xi$.

Converse part is similar.
\end{proof}

\begin{definition} (cf. \cite{AYTER1})
Let $(X,S)$ be an $S$-metric space. Then $c \in X$ is called a statistical cluster point of a sequence $\{x_{n}\}$ in $(X,S)$ if for every $\varepsilon>0$, $\delta(\{n \in \mathbb N: S(x_n,x_n,c) < \varepsilon \}) \neq 0$.
\end{definition}

\begin{theorem}
 Let $\{x_{n}\}$ be a sequence in an $S$-metric space $(X, S)$. If $c$ is a cluster point of $\{x_{n}\}$, then $st-LIM^{r}x_{n}  \subset B_{S}[c,r]$ for some $r >0$.
\end{theorem}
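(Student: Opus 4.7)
The plan is to take an arbitrary element $x \in st\text{-}LIM^{r}x_{n}$ and show that $S(x,x,c) \leq r$, so that $x \in B_{S}[c,r]$. Fixing an arbitrary $\varepsilon>0$, I would introduce two sets: the ``bad'' set $A_{1}(\varepsilon) = \{ n \in \mathbb N : S(x_n,x_n,x) \geq r+\varepsilon \}$, which has density $0$ by the defining property of the $r$-statistical limit $x$, and the ``good'' set $A_{2}(\varepsilon) = \{ n \in \mathbb N : S(x_n,x_n,c) < \varepsilon \}$, which has nonzero density by the hypothesis that $c$ is a statistical cluster point (per the definition immediately above the theorem).

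Next, I would extract a single index $n$ at which both estimates hold simultaneously. Because $\delta(A_{1}(\varepsilon))=0$, removing $A_{1}(\varepsilon)$ from $A_{2}(\varepsilon)$ does not affect its density, so $A_{2}(\varepsilon) \cap (A_{1}(\varepsilon))^{c}$ is nonempty (in fact of positive density). Pick any such $n$. Then $S(x_n,x_n,c) < \varepsilon$ and $S(x_n,x_n,x) < r+\varepsilon$.

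The key computation is then a single application of the $S$-metric triangle inequality with $a=x_n$, combined with Lemma 2.1 (the symmetry $S(a,a,b)=S(b,b,a)$):
\begin{equation*}
S(c,c,x) \;\leq\; S(c,c,x_n)+S(c,c,x_n)+S(x,x,x_n)
\;=\; 2S(x_n,x_n,c)+S(x_n,x_n,x) \;<\; 2\varepsilon + (r+\varepsilon) \;=\; r+3\varepsilon.
\end{equation*}
Since $\varepsilon>0$ was arbitrary, this forces $S(c,c,x) \leq r$, i.e.\ $x \in B_{S}[c,r]$, completing the inclusion.

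The proof is mostly bookkeeping; the only mildly delicate point — and the step I expect to be the main (minor) obstacle — is justifying that the intersection $A_{2}(\varepsilon) \cap (A_{1}(\varepsilon))^{c}$ is nonempty. This follows at once from the elementary fact that a set of positive density cannot be contained in a set of density zero, but it is worth stating explicitly so the reader sees why we may simultaneously control both $S(x_n,x_n,x)$ and $S(x_n,x_n,c)$ at the same index $n$. Everything else is a direct invocation of the triangle axiom $(iii)$ of Definition~2.1 together with Lemma~2.1.
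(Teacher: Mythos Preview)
Your argument is correct and follows essentially the same route as the paper's proof: both pick an index in the intersection of the density-one complement of the $r$-statistical limit set with the positive-density cluster-point set, then apply the $S$-metric triangle inequality and Lemma~2.1 to bound $S(c,c,x)$ and let $\varepsilon\to 0$. The only cosmetic difference is that the paper uses thresholds $\varepsilon/3$ (yielding the bound $r+\varepsilon$) where you use $\varepsilon$ (yielding $r+3\varepsilon$), and your explicit justification that $A_{2}(\varepsilon)\cap A_{1}(\varepsilon)^{c}\neq\emptyset$ is a point the paper leaves implicit.
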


\begin{proof}
Let $\varepsilon>0$ and let  $x \in st-LIM^{r}x_{n}$.
Then for $\varepsilon > 0$, $\delta(B_1)=0$, where $B_1=\{n \in \mathbb{N}: S(x_n,x_n,x) \geq r+ \frac{\varepsilon}{3} \}$. 
Again, since $c$ is a cluster point of $\{x_{n}\}$, for the same $\varepsilon > 0$, $\delta(B_2) \neq 0$, where $B_2=\{n \in \mathbb{N}: S(x_n,x_n,c) < \frac{\varepsilon}{3} \}$.
Now, let $k \in B_{1}^c \cap B_{2}$, then 
 $S(x_k,x_k,x) < r+ \frac{\varepsilon}{3}$ and $S(x_k,x_k,c) < \frac{\varepsilon}{3}$.
Therefore, we can write
\begin{equation*}
    \begin{split}
        S(c,c,x) & \leq S(c,c,x_k)+S(c,c,x_k)+S(x,x,x_k) \\
                 & = S(x_k,x_k,c)+S(x_k,x_k,c)+S(x_k,x_k,x) \\
                 & < \frac{\varepsilon}{3} +\frac{\varepsilon}{3} +(r+ \frac{\varepsilon}{3}) \\
                 & = r + \varepsilon
    \end{split}
\end{equation*}
Since $\varepsilon>0$ is arbitrary, $S(c,c,x) \leq r$ and hence $S(x,x,c) \leq r$.
Therefore $x \in B_{S}[c,r]$.
Hence $st-LIM^{r}x_{n}  \subset \overline B_{S}[c,r]$ holds for some $r >0$.

\end{proof}

\subsection*{Acknowledgements}
The first author is thankful to The University of Burdwan for the grant of Senior Research Fellowship (State Funded) during the preparation of this paper. Both authors are also thankful to DST, Govt of India for providing FIST project to the dept. of Mathematics, B.U. \\

\end{document}